\crefname{subsection}{subsection}{subsections}
\setlist[enumerate]{label=\rm{(\arabic*)}}
\theoremstyle{plain}
\newtheorem{thm}{Theorem}[section]
\newtheorem{thmIntro}{Theorem}
\newtheorem*{prpIntro2}{\Cref{prpMMGluedMap}}
\newtheorem{prpIntro}[thmIntro]{Proposition}
\newtheorem{prp}[thm]{Proposition}
\newtheorem{lmm}[thm]{Lemma}
\newtheorem*{algo*}{Algorithm}
\theoremstyle{definition}
\newtheorem{ntt}[thmIntro]{Notation}
\newtheorem{prpdfn}[thm]{Proposition-Definition}
\newtheorem{ex}[thm]{Example}
\newtheorem{dfn}[thm]{Definition}
\newtheorem{rmk}[thm]{Remark}
\numberwithin{equation}{section}
\newtheorem{cjc}[thm]{Conjecture}
\DeclareMathOperator{\codim}{codim}
\DeclareMathOperator{\lgth}{length}
\DeclareMathOperator{\Proj}{Proj}
\DeclareMathOperator{\BiProj}{BiProj}
\DeclareMathOperator{\Tor}{Tor}
\newcommand*{\mrm}[1]{\mathrm{#1}}
\newcommand*{\mbf}[1]{\mathbf{#1}}
\newcommand*{\mc}[1]{\mathcal{#1}}
\newcommand*{\p}[1]{\mathbb{P}^{#1}}
\newcommand*{\kk}{\mrm{k}}
\newcommand*{\RR}{\mrm{R}}
\renewcommand*{\SS}{\mrm{S}}
\newcommand*{\BB}{\mrm{B}}
\newcommand*{\R}{\mathcal{R}}
\DeclareMathOperator{\I}{I}
\DeclareMathOperator{\J}{J}
\DeclareMathOperator{\Hilb}{H}
\newcommand*{\A}{\mathcal{A}}
\DeclareMathOperator{\Num}{Num}
\newcommand*{\G}{\Gamma}
\DeclareMathOperator{\Sym}{Sym}
\newcommand*{\X}{\mathbb{X}}
\newcommand*{\V}{\mathbb{V}}
\newcommand*{\gm}[2]{[#1|#2]}
\newcommand*{\msf}[1]{\mathsf{#1}}
\newcommand{\e}[2]{\deg^{#2}_{\mathbb{P}}{#1}}
\DeclareMathOperator{\Conv}{Conv}
\renewcommand{\d}[2]{d_{#2}(#1)}
\DeclareMathOperator{\NP}{NP}
\DeclareMathOperator{\Vol}{Vol}
\DeclareMathOperator{\MV}{MV}
\date{\today}
\title[Gluing determinantal Cremona maps]{Gluing determinantal Cremona maps}
\author{R\'emi Bignalet-Cazalet}
\address{Universit\`a degli studi di Genova, Dipartimento di matematica,
Via Dodecaneso 35, 16146 Genova (GE), Italy }
\email{bignalet@dima.unige.fr}
\thanks{The author was funded by the European Union's Horizon 2020 research and innovation program as a  Marie Sk\l{}odowska-Curie fellow of the Istituto Nazionale di Alta Matematica "Francesco Severi" grant No. 713485.
}
\keywords{determinantal Cremona maps, multidegree/projective degrees of a rational map, mixed volumes of polytopes, Bernstein's theorem on sparse polynomial systems, glued determinantal Cremona map}
\subjclass[2010]{
13D02, % Syzygies, resolutions, complexes
14E05, %Rational and birational maps
}
\begin{document}
\definecolor{biblio}{rgb}{0,0.65,1}
\definecolor{indigo}{rgb}{0.29,0,0.51}

\definecolor{zzttqq}{rgb}{0.6,0.2,0}
\definecolor{qqqqff}{rgb}{0,0,1}
\definecolor{xdxdff}{rgb}{0.49,0.49,1}
\definecolor{cqcqcq}{rgb}{0.75,0.75,0.75}

\begin{abstract}
We study determinantal Cremona maps, i.e.\ birational maps whose base ideal is the maximal minors ideal of a given matrix $\Phi$, via the resolution of the polynomials systems defined by $\Phi$. Using convex geometry, this approach leads in particular to describe the projective degrees of some almost linear determinantal maps.
\end{abstract}
\maketitle
%\tableofcontents
\section*{Introduction}\label{labIntro}
In this note, we consider rational maps $f=(f_0:\ldots:f_n):\p{n}_\kk\dashrightarrow\p{n}_\kk$ defined by $n+1$ homogeneous polynomials $f_0,\ldots,f_n\in\RR=\kk[x_0,\ldots,x_n]$ without common factor, that are determinantal i.e.\ that the \emph{base ideal} $\I_f=(f_0,\ldots,f_n)\subset \RR$ of $f$ is the $n$-minors ideal of its minimal presenting matrix $\Phi_f$.

Our guiding remark is that the so-called \emph{standard Cremona map} (see \cite{PanGonzalez2005CarClassesDetCremTrans} for more references about the standard Cremona maps)\[\tau_n=(x_1\cdots x_n:x_0x_2\cdots x_n:\ldots:x_0\cdots x_{n-1}):\p{n}_\kk\dashrightarrow\p{n}_\kk\] can be interpreted as the gluing of $n$ standard Cremona maps $\tau^{(1)}_1=(x_1:x_0):\p{1}_\kk\dashrightarrow \p{1}_\kk$, $\tau^{(2)}_1=(x_2:x_1):\p{1}_\kk\dashrightarrow \p{1}_\kk$, $\ldots$, $\tau^{(n)}_1=(x_{n}:x_{n-1}):\p{1}_\kk\dashrightarrow \p{1}_\kk$. Indeed, $\I_{\tau_n}$ is the $n$-minors ideal of the $(n+1)\times n$ matrix 
\[\Phi_{\tau_n}=\begin{scriptsize}\begin{pmatrix}
   x_0  &    0   & \ldots & \ldots & 0      \\
   -x_1 &   x_1  & \ddots &        & \vdots \\
    0   &  -x_2  & \ddots & \ddots & \vdots \\
 \vdots & \ddots & \ddots & \ddots &    0   \\
 \vdots &        & \ddots & \ddots & x_{n-1}\\
    0   & \ldots & \ldots &    0   & -x_{n}
\end{pmatrix}\end{scriptsize}
\] which is the concatenation of the presenting matrices $\Phi_{\tau^{(1)}_1}=\begin{scriptsize}
\begin{pmatrix}
x_0 \\ -x_1
\end{pmatrix}
\end{scriptsize},\ldots,\Phi_{\tau^{(n)}_1}=\begin{scriptsize}
\begin{pmatrix}
x_{n-1} \\ -x_{n}
\end{pmatrix}
\end{scriptsize}$ respectively of $\I_{\tau^{(1)}_1},\ldots, \I_{\tau^{(n)}_1}$ completed with zeros entries.

This work takes it roots in trying to generalize this previous construction and, given two integers $m,m'\geqslant 1$, to understand the projective degrees of such an outputted glued determinantal map $\gm{g}{g'}:\p{m+m'}_\kk\dashrightarrow\p{m+m'}_\kk$ via the properties of the inputted determinantal maps $g:\p{m}_\kk\dashrightarrow\p{m}_\kk$ and $g':\p{m'}_\kk\dashrightarrow\p{m'}_\kk$. Recall that given any map $f=(f_0:\ldots:f_n):\p{n}_\kk\dashrightarrow\p{n}_\kk$ and assuming that the base field $\kk$ is algebraically closed, the projective degrees $\d{f}{0},\d{f}{1}\ldots, \d{f}{n}$ of $f$ are the quantities such that for any $k\in\lbrace 0,\ldots,n\rbrace$, $\d{f}{k}$ is the cardinal $\#\big( H_\mbf{x}^{k}\cap f^{-1}(H_\mbf{y}^{n-k})\big)$ of the intersection between a general linear space $H_{\mbf{x}}^{k}$ of codimension $k$ in the source space of $f$ with the preimage of a general linear space $H_\mbf{y}^{n-k}$ of codimension $n-k$ in the target space of $f$, see \cite[Ex. 19.4]{harris1992algebraic} or below for more developments about the projective degrees of a map. Given this latter definition, let us state a  first result describing, in some situations, the projective degree of $\gm{g}{g'}$.
\begin{prpIntro}\label{prpKunneth}
Let $m,m'\geqslant 1$, $\RR_m=\kk[x_0,\ldots,x_m]$, $\RR_{m'}=\kk[x_m,\ldots,x_{m+m'}]$ and $\RR=\kk[x_0,\ldots,x_{m+m'}]$ and let $g:\p{m}_\kk\dashrightarrow \p{m}_\kk$ (resp. $g':\p{m'}_\kk\dashrightarrow\p{m'}_\kk$) be a determinantal map such that $\I_{g}$ (resp. $\I_{g'}$) is the $m$-minors ideal of a matrix $\Phi_{g}\in\RR_m^{(m+1)\times m}$ (resp. the $m'$-minors ideal of $\Phi_{g'}\in\RR_{m'}^{(m'+1)\times m'}$).

Put
\[
\begin{tikzpicture}[
style1/.style={
  matrix of math nodes,
  every node/.append style={text width=#1,align=center,minimum height=5ex},
  nodes in empty cells,
  left delimiter=(,
  right delimiter=),
  }
  ]
\matrix[style1=0.85cm] (1mat)
{
  &   \\
  &  \\
  &  \\
};
\draw[dashed]
  (1mat-2-2.west) -- (1mat-2-2.east);
\draw[dashed]
  (1mat-2-1.south west) -- (1mat-2-1.south east);
\draw[dashed]
  (1mat-1-1.north east) -- (1mat-3-1.south east);

\node
  at (1mat-1-1.south) {$\Phi_g$};
\node
  at (1mat-1-2) {$0_{m\times m'}$};
\node 
  at (1mat-3-1) {$0_{m'\times m}$};
\node
  at (1mat-3-2) {$\Phi_{g'}$};

\begin{scriptsize}
\draw[decoration={brace,raise=12pt},decorate]
  (1mat-1-2.north east) -- 
  node[right=15pt] {$m$} 
  (1mat-2-2.east);
\draw[decoration={brace,amplitude=5pt,raise=12pt},decorate]
  (1mat-2-2.east) -- 
  node[right=15pt] {$m'+1$} 
  (1mat-3-2.south east);
\draw[decoration={brace,raise=7pt},decorate]
  (1mat-1-1.north west) -- 
  node[above=8pt] {$m$} 
  (1mat-1-1.north east);
\draw[decoration={brace,raise=7pt},decorate]
  (1mat-1-2.north west) -- 
  node[above=8pt] {$m'$} 
  (1mat-1-2.north east);
  
\draw[decoration={mirror, brace,amplitude=5pt,raise=11pt},decorate]
  (1mat-1-1.north west) -- 
  node[left=14pt] {$m+1$} 
  (1mat-2-1.south west);
\draw[decoration={mirror, brace,raise=12pt},decorate]
  (1mat-3-1.north west) -- 
  node[left=14pt] {$m'$} 
  (1mat-3-1.south west);
 
\end{scriptsize}
\node [left=70pt]{$\Phi_{\gm{g}{g'}}=$};
\node[right=60pt] {$\in\RR^{(m+m'+1)\times (m+m') }$};
\end{tikzpicture}
\]
and assume that the ideal of $(m+m')$-minors of $\Phi_{\gm{g}{g'}}$ is the base ideal of a map $\gm{g}{g'}:\p{m+m'}_\kk\dashrightarrow \p{m+m'}_\kk$.

Suppose that the tensor product $\mathbb{F}_{g}\otimes \mathbb{F}_{g'}$ of a free resolution $\mathbb{F}_{g}$ of the graph of $g$ and a free resolution $\mathbb{F}_{g'}$ of the graph of $g'$ provides a free resolution of the graph of $\gm{g}{g'}$. Then, given any $k\in \lbrace 0,\ldots,m+m'\rbrace$
\begin{equation}\label{eqKunnethIntro}\tag{1}
\d{\gm{g}{g'}}{k}=\underset{p=0}{\overset{k}{\sum}}\d{g}{p}\d{g'}{k-p}
\end{equation} 
with the convention that $\d{g}{p}=0$ (resp. $\d{g'}{p}=0$) if $p>m$ (resp. $p>m'$).
\end{prpIntro}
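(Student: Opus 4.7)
The plan is to encode each projective degree as a coefficient of the bihomogeneous class $[\Gamma_f]\in A^*(\p{n}\times\p{n})$ of the corresponding graph, and to deduce the convolution \eqref{eqKunnethIntro} from the multiplicativity of K-polynomials under a tensor product of resolutions.

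First I would bigrade the ambient rings $\RR_m[y_0,\ldots,y_m]$, $\RR_{m'}[y_0,\ldots,y_{m'}]$ and $\RR[y_0,\ldots,y_{m+m'}]$ of $\Gamma_g,\Gamma_{g'},\Gamma_{\gm{g}{g'}}$ by assigning bidegree $(1,0)$ to each $x_i$ and $(0,1)$ to each $y_j$, and recall that for any rational map $f:\p{n}\dashrightarrow\p{n}$ one has $[\Gamma_f]=\sum_{k=0}^n \d{f}{k}\, h_1^k h_2^{n-k}$ in $\mathbb{Z}[h_1,h_2]/(h_1^{n+1},h_2^{n+1})$ by the very definition of projective degrees, and that $[\Gamma_f]$ is the degree-$n$ (i.e.\ lowest non-zero) homogeneous component of $K_f(1-h_1,1-h_2)$, where $K_f(s,t)$ is the bigraded K-polynomial of the coordinate ring of $\Gamma_f$ --- equivalently, the alternating sum, over any bigraded free resolution, of the bigraded twists of its terms.

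Given the hypothesis $\mathbb{F}_{\gm{g}{g'}}=\mathbb{F}_g\otimes\mathbb{F}_{g'}$, the multiplicativity of Euler characteristics of complexes immediately yields
\[
K_{\gm{g}{g'}}(s,t) = K_g(s,t)\cdot K_{g'}(s,t).
\]
Substituting $s=1-h_1$, $t=1-h_2$, the factor $K_g(1-h_1,1-h_2)$ begins in degree $m$ with $[\Gamma_g]=\sum_p \d{g}{p}\, h_1^p h_2^{m-p}$ and $K_{g'}(1-h_1,1-h_2)$ begins in degree $m'$ with $[\Gamma_{g'}]=\sum_q \d{g'}{q}\, h_1^q h_2^{m'-q}$, so the degree-$(m+m')$ part of the product is $[\Gamma_g]\cdot[\Gamma_{g'}]$; by the previous step this must equal $[\Gamma_{\gm{g}{g'}}]=\sum_k \d{\gm{g}{g'}}{k}\, h_1^k h_2^{m+m'-k}$. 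Expanding and matching coefficients of $h_1^k h_2^{m+m'-k}$ gives \eqref{eqKunnethIntro}, with the conventions $\d{g}{p}=0$ for $p>m$ and $\d{g'}{q}=0$ for $q>m'$ automatic from the degree bounds.

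The main subtlety I expect to handle carefully is the identity $K_{\gm{g}{g'}}=K_g\cdot K_{g'}$ itself: the resolutions $\mathbb{F}_g$ and $\mathbb{F}_{g'}$ live over ambient rings that share variables with $\RR[y_0,\ldots,y_{m+m'}]$ (notably $x_m$ on the source side, together with the identifications on the target side dictated by the way maximal minors of $\Phi_{\gm{g}{g'}}$ factor as products of maximal minors of $\Phi_g$ and $\Phi_{g'}$), so that the naive $\kk$-tensor product of the two resolutions a priori lives over a larger polynomial ring. Interpreting it as a bigraded free resolution of the coordinate ring of $\Gamma_{\gm{g}{g'}}$ over $\RR[y_0,\ldots,y_{m+m'}]$ thus requires careful bookkeeping of bigraded twists so that the K-polynomials indeed multiply in the bigrading relevant to $\gm{g}{g'}$.
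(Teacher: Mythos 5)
Your argument is correct and follows essentially the same route as the paper: the bigraded K-polynomial you use is exactly the numerator $\Num_{\msf{S}/\I_{\Gamma}}(T_0,T_1)$ of the bivariate Hilbert series that the paper multiplies in \Cref{lmmKunneth}, and extracting the lowest-degree (i.e.\ codimension $m+m'$) homogeneous component of the product evaluated at $(1-T_0,1-T_1)$ is precisely the paper's computation. The ambient-ring bookkeeping you flag at the end is resolved in the paper by viewing $\Gamma_g$ and $\Gamma_{g'}$ as cones inside $\p{m+m'}_\kk\times\p{m+m'}_\kk=\BiProj(\SS)$ from the outset, so that both resolutions are complexes of free modules over the single ring $\SS$ and the multiplicativity of the numerators is immediate.
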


As we will explain, \Cref{prpKunneth} applies in particular in the previous example of the standard Cremona map.

Actually, other definitions of a glued determinantal map $\gm{g}{g'}$ can be given depending on the convention adopted to concatenate the matrices $\Phi_g$ and $\Phi_{g'}$. For instance, in the case where each entry of any given column of $\Phi_g$ and $\Phi_{g'}$ is a general combination of given polynomials, a pseudo concatenate matrix $\Phi_{\gm{g}{g'}}$ and an associated glued map $\gm{g}{g'}$ can still be defined. In this direction:

\begin{prpIntro2}
Let $m,d\geqslant 1$ and let $\Phi_{\gm{g}{g'}}=(\phi_{ij})_{\genfrac{}{}{0pt}{}{0\leqslant i \leqslant m+2}{1\leqslant j \leqslant m+2}}$ be such that:
\begin{itemize}
\item for all $l\in\lbrace 1,\ldots, m\rbrace$, all the entries $\phi_{il}$ of the $l$-th column of $\Phi_{\gm{g}{g'}}$ are general linear combinations of $x_0,\ldots,x_{m}$,
\item all the entries $\phi_{i,m+1}$ of the $m+1$-th column of $\Phi_{\gm{g}{g'}}$ are general linear combinations of $x_{m+1}$ and $x_{m+2}$,
\item  all the entries $\phi_{i,m+2}$ of the $m+2$-th column of $\Phi_{\gm{g}{g'}}$ are general linear combinations of the generators of the ideal \[(x_m,x_{m+1},x_{m+2})\cdot(x_{m+1},x_{m+2})^{d-1}=(x_mx_{m+1}^{d-1},x_mx_{m+2}^{d-1})+(x_{m+1},x_{m+2})^{d}.\]
\end{itemize} 

Then the map $\gm{g}{g'}:\p{m+2}_\kk\dasharrow\p{m+2}_\kk$ whose base ideal $\I_{\gm{g}{g'}}$ is the $(m+2)$-minors ideal of $\Phi_{\gm{g}{g'}}$ is a determinantal map and moreover:
\[\forall k\in \lbrace 0,\ldots,m+2\rbrace,\,\d{\gm{g}{g'}}{k}=\binom{m}{m-k}+(d+1)\binom{m}{m-k+1}+\binom{m}{m-k+2}\] with the convention that $\binom{j}{i}=0$ if $i<0$ or $i>j$. In particular, the projective degrees' vector $\d{\gm{g}{g'}}{}=\big(\d{\gm{g}{g'}}{0},\ldots,\d{\gm{g}{g'}}{m+2}\big)$ of $\gm{g}{g'}$ is palindromic.
\end{prpIntro2}

Our goal behind this glued construction is to approach combinatorially the projective degrees of the \emph{almost linear determinantal maps}, i.e.\ the determinantal map $f$ defined by \emph{almost linear matrix} $\Phi_f$ for which all but one column are filled with linear polynomials (see \cite{KusPolUl2011RatNormScrolls} for development about maps defined by almost linear matrices), in order to eventually extend in greater dimension previous works such as in \cite{DesHan2017QQ}. 

The proof of \Cref{prpMMGluedMap} relies on the following observations. When considering a map $f:\p{n}_\kk\dashrightarrow \p{n}_\kk$ of base ideal $ \I_f$ with presenting matrix $\Phi_f$, the scheme $\p{}(\I_f)=\Proj\big(\Sym(\I_f)\big)$ provides an approximation of the projective degrees of $f$. This is now a classical approach, see \cite{RussoSimis2001OnBirMap}, \cite{BuChJo2009TorSymAlg}, \cite{BuCiDAnd2018MultiGrad} and \cite{CidRuiz2021MixedMultProjDeg} for developments and applications in broader contexts. If $f$ is in addition \emph{Koszul-determinantal} i.e.\ $\p{}(\I_f)$ is a complete intersection in its embedding in $\p{n}_\kk\times\p{n}_\kk$ defined by $\Phi_f$ (a condition on $f$ stronger than being determinantal and that we will explicit below), one can use the resolution of zero-dimensional complete intersection defined by sparse polynomials to compute, in some situations, the projective degrees of $f$. Indeed, Bernstein's theorem states that, under conditions, the mixed volume associated to a polynomial system $(E)$ compute the solutions of $(E)$ with non zero coordinates, see \cite[7. Th. 5.4]{coXLittleOShea2005UsingAlgGeo} or \Cref{thmBernstein} below. \Cref{prpMMGluedMap} then follows from convex geometry, and in particular the projection formula in \cite[Lemma 6]{SteffensTheobald2010MixedVolumesTechniques} decomposing some mixed volumes as the product of mixed volumes in smaller dimension.

\subsection*{Contents of the paper} In \Cref{SecKunneth}, we precise the definition of the projective degrees of a map $f$, emphasizing how they are related to (bi-graded) free resolutions of the ideal of the graph $\Gamma_f$ of $f$. It gives the background of our proof of \Cref{prpKunneth}. In \Cref{SecMixVol}, we precise the approximations of the projective degrees of a Koszul-determinantal map given by the mixed volumes associated to the presentation matrix its the base ideal. We then give another definition of a glued determinantal map $\gm{g}{g'}:\p{m+m'}_\kk\dashrightarrow\p{m+m'}_\kk$ via general properties of two initial determinantal maps $g:\p{m}_\kk\dashrightarrow\p{m}_\kk$ and $g':\p{m'}_\kk\dashrightarrow\p{m'}_\kk$. Under assumptions on $g$ and $g'$, the projective degrees of such a glued map $\gm{g}{g'}$ are then described by a Kunneth-like formula in the projective degrees of $g$ and $g'$, see \Cref{prpMMGluedMap} and its proof.

The explicit computations given in this paper were made using basic functions of the software systems \textsc{Polymake} and  \textsc{Macaulay2} and the \textsc{Cremona} package  \cite{stagliano2017Mac2Pack} associated. The corresponding codes are available on request.

\subsection*{Acknowledgements} I thank Adrien Dubouloz and Daniele Faenzi for their comments about a first version of this manuscript. I also warmly thank Alessandro de Stefani for its help on some arguments of the first section.

\begin{ntt}\label{ntt}
We let $\kk$ be any field in \Cref{SecKunneth} and $\kk=\mathbb{C}$ in \Cref{SecMixVol}.

When recalling generalities about rational maps, we consider a positive integer $n\in\mathbb{N}^*$, $\msf{R}=\kk[\msf{x}_0,\ldots,\msf{x}_0]$, $\p{n}_\kk=\Proj(\msf{R})$ and a rational map $f:\p{n}_\kk\dashrightarrow \p{n}_\kk$.

When considering glued maps, we let $m,m'\geqslant 1$, $\RR_m=\kk[x_0,\ldots,x_{m}]$, $\RR_{m'}=\kk[x_m,\ldots,x_{m+m'}]$, $\RR=\kk[x_0,\ldots,x_{m+m'}]$ and we always consider that $\RR_m$ and $\RR_{m'}$ are embedded in $\RR$. Let also $g:\p{m}_\kk\dashrightarrow \p{m}_\kk$ (resp. $g':\p{m'}_\kk\dashrightarrow\p{m'}_\kk$),  where $\p{m}_\kk=\Proj(\RR_m)$ (resp. $\p{m'}_\kk=\Proj(\RR_{m'})$). The associated glued map is denoted $\gm{g}{g'}:\p{m+m'}_\kk\dashrightarrow\p{m+m'}_\kk$ where $\p{m+m'}_\kk=\Proj(\RR_{m+m'})$.
\end{ntt}

\section{Free resolutions of the graph of glued determinantal maps}\label{SecKunneth}
Our motivation in describing a free resolution of the ideal of the graph of a map $f$ (free resolution of the graph for short) lies on the fact it gives an insight on the projective degrees of $f$ and let us briefly outline why.

As explained in \cite[Chapter 8]{MilSturm2005CombCA}, the theory of multigraded Hilbert series is the fine context for defining and studying the multidegree of subschemes in multiprojective spaces so we adapt here previous expositions as in \cite{MilSturm2005CombCA} or the more recent one in \cite{CidRuiz2021MixedMultProjDeg} to the following bi-graded context. Let $n\geqslant 1$ be an integer, $\kk$ be any field (if no additional specific assumptions), $\msf{R}=\kk[\mathsf{x}_0,\ldots,\mathsf{x}_n]$ and $\msf{S}=\mathsf{R}[\mathsf{y}_0,\ldots,\mathsf{y}_n]$ be the $\mathbb{Z}^2$-graded polynomial ring with the standard graduation $\deg(\mathsf{x}_i)=(1,0)$ and $\deg(\mathsf{y}_i)=(0,1)$ for all $i\in\lbrace 0,\ldots,n\rbrace$. Given a bi-graded algebra $\BB$, we let $\BiProj(\BB)$ be the set of bi-graded prime ideals of $\BB$. Thus, a subscheme $\X$ of $\BiProj(\msf{S})\simeq \p{n}_\kk\times\p{n}_\kk$ is defined by a bi-graded ideal $\A\subset\msf{S} $ and one associates to $\X=\BiProj(\msf{S}/\A)$ its bi-variate Hilbert series \[\Hilb_{\msf{S}/\A}(T_0,T_1)=\sum_{(n_0,n_1)\in\mathbb{Z}^2}(\lgth(\msf{S}/\A)_{(n_0,n_1)}T_0^{n_0}T_1^{n_1}.\] Then, writing $\Hilb_{\msf{S}/\A}(T_0,T_1)=\frac{\Num_{\msf{S}/\A}(T_0,T_1)}{(1-T_0)^l(1-T_1)^l}$ and assuming that $\X=\BiProj(\msf{S}/\A)$ has codimension $c$, the coefficients of the homogeneous component of $\Num_{\msf{S}/\A}(1-T_0,1-T_1)$ of total degree $c$ define the multidegree of $\X$.

\begin{dfn}[multidegree of a subscheme in $\p{n}_\kk\times\p{n}_\kk$]
Let $\A$ be a bi-graded ideal of $\msf{S}$ and assume that $\X=\BiProj(\msf{S}/\A)$ has codimension $c$ in $\BiProj(\msf{S})\simeq \p{n}_\kk\times\p{n}_\kk$.

Given $k \in \lbrace 0,\ldots,c \rbrace$, define $\e{\X}{c-k,k}$ as the coefficient of the monomial of bi-degree $(c-k,k)$ of $\Num_{\msf{S}/\A}(1-T_0,1-T_1)$.

The \emph{multidegree} $\e{\X}{}$ of $\X$ is the $c+1$-uple \[\e{\X}{}=\big(\e{\X}{c,0},\ldots,\e{\X}{0,c}\big).\]
\end{dfn}

\begin{rmk}\label{rmkCornerstone}
As explained for instance in \cite[Proposition 7.16]{harris1992algebraic} or \cite[Theorem 4.7]{CidRuiz2021MixedMultProjDeg}, the multidegree $\e{\X}{}=\big(\e{\X}{c,0},\ldots,\e{\X}{0,c}\big)$ of a $c$-dimensional subscheme $\X\subset \p{n}_\kk\times\p{n}_\kk$ has the following geometric interpretation. Assuming the base field $\kk$ is algebraically closed and letting $k \in \lbrace 0,\ldots, c\rbrace$, we have:
\[\e{\X}{c-k,k}=\lgth(H_\mbf{x}^{k}\cap \X\cap H_\mbf{y}^{c-k})\]
where $H_\mbf{x}^{k}=\V(l_{1,0},\ldots,l_{k,0})$ is the zero locus of $k$ general linear forms $l_{1,0},\ldots,l_{k,0}$ in the $\mathsf{x}$-variables (that is $\deg(l_{i,0})=(1,0)$ for all $i\in\lbrace 0,\ldots,k\rbrace$) and $H_\mbf{y}^{k}=\V(l_{0,1},\ldots,l_{0,c-k})$ is the zero locus of $c-k$ general linear forms $l_{0,1},\ldots,l_{0,c-k}$ in the $\mathsf{y}$-variables.
\end{rmk}

\begin{dfn}[projective degrees of a map]
Let $f:\p{n}_\kk\dasharrow\p{n}_\kk$, regular on a dense open subset $U\subset\p{n}_\kk$, and let $\G_f=\overline{\lbrace \big(\mathsf{x},f(\mathsf{x})\big), \msf{x}\in U\rbrace}\subset\p{n}_\kk\times\p{n}_\kk$ be the graph of $f$. It is an $n$-fold so, for $k\in\lbrace 0,\ldots,n\rbrace$, the \emph{$k$-th projective degree of $f$}, written $\d{f}{k}$, is defined by \[\d{f}{k}:=\e{\G_f}{n-k,k}.\] In the following, our convention is also that $\d{f}{k}=0$ if $k>n$.
\end{dfn}
By \Cref{rmkCornerstone}, note that $\d{f}{0}=1$ if and only if $f$ is an isomorphism between two dense open subsets of $\p{n}_\kk$ in which case $f$ is said to be a \emph{Cremona map}.

Following for instance \cite[III.3.6]{eisenbud2000geo}, a bigraded free resolution of $S/\I_{\Gamma_f}$ provides the numerator $\Num_{\msf{S}/\I_f}(T_0,T_1)$ of the Hilbert series of the coordinate ring $\msf{S}/\I_{\Gamma_f}$ of $\Gamma_f$ and thus provides the projective degrees of $f$.

Now let us clarify in which embedding we consider the graphs $\Gamma_g$ and $\Gamma_{g'}$ of, respectively, a map $g:\p{m}_\kk\dashrightarrow\p{m}_\kk$ and $g':\p{m'}_\kk\dashrightarrow\p{m'}_\kk$ see \Cref{ntt}. Denote by $\SS_m=\RR_m[y_0,\ldots,y_m]$, $\SS_{m'}=\RR_{m'}[y_m,\ldots,y_{m+m'}]$, $\SS=\RR[y_0,\ldots,y_{m+m'}]$ and $\I_{\Gamma_g}\subset \SS$ (resp. $\I_{\Gamma_{g'}}\subset \SS$) be the ideal of the graph $\Gamma_g\subset \p{m+m'}_\kk\times  \p{m+m'}_\kk$ of $g$ (resp. $\Gamma_{g'}\subset \p{m+m'}_\kk\times  \p{m+m'}_\kk$ of $g'$) where $\Gamma_g\subset \p{m+m'}_\kk\times  \p{m+m'}_\kk=\BiProj(\SS)$. Geometrically, $\Gamma_g$ (resp. $\Gamma_{g'}$) is thus the cone of vertex $\Gamma_g\cap \p{m}_\kk\times  \p{m}_\kk$ (resp. $\Gamma_{g'}\cap \p{m'}_\kk\times  \p{m'}_\kk$) where $\p{m}_\kk\times  \p{m}_\kk=\BiProj(\SS_m)$ (resp. $\p{m'}_\kk\times  \p{m'}_\kk=\BiProj(\SS_{m'})$. \Cref{prpKunneth} follows from the following results about $\Gamma_g\cap \Gamma_{g'}$.

\begin{lmm}\label{lmmKunneth} Assume that the tensor product $\mathbb{F}_g\otimes\mathbb{F}_{g'}$, of a bigraded free resolution $\mathbb{F}_g$ of $\Gamma_g$ with a bigraded free resolution $\mathbb{F}_{g'}$ of $\Gamma_{g'}$, is a free resolution of $\Gamma_g\cap \Gamma_{g'}$. Then for any $k\in\lbrace 0,\ldots,m+m'\rbrace$:
\begin{equation*}\d{\gm{g}{g'}}{k}=\underset{p=0}{\overset{k}{\sum}}\d{g}{p}\d{g'}{k-p}.
\end{equation*}
\end{lmm}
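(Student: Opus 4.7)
The plan is to read off the projective degrees of $\gm{g}{g'}$ from the bi-graded Hilbert numerator of $\G_g\cap\G_{g'}$, which the hypothesis on the tensor-product resolution forces to factor, and then to rewrite everything in terms of multidegrees.

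First I would record the geometric translation of the block-diagonal shape of $\Phi_{\gm{g}{g'}}$: the identification $\G_{\gm{g}{g'}}=\G_g\cap\G_{g'}$ as bi-graded subschemes of $\BiProj(\SS)\simeq\p{m+m'}_\kk\times\p{m+m'}_\kk$, or equivalently $\I_{\G_{\gm{g}{g'}}}=\I_{\G_g}+\I_{\G_{g'}}$ in $\SS$. Granting this, the assumption that $\mathbb{F}_g\otimes\mathbb{F}_{g'}$ resolves $\SS/(\I_{\G_g}+\I_{\G_{g'}})$ translates, via the standard computation of the Hilbert series as an alternating sum of ranks along a free resolution, into the multiplicative identity
\begin{equation*}
\Num_{\SS/\I_{\G_{\gm{g}{g'}}}}(T_0,T_1)=\Num_{\SS/\I_{\G_g}}(T_0,T_1)\cdot\Num_{\SS/\I_{\G_{g'}}}(T_0,T_1).
\end{equation*}

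Next I would check that each factor on the right is intrinsic to its respective map: extending the ambient ring from $\SS_m$ to $\SS$ only multiplies the Hilbert series of $\SS_m/\I_{\G_g}$ by $(1-T_0)^{-m'}(1-T_1)^{-m'}$, leaving the numerator unchanged, and likewise for $g'$. Substituting $T_0\mapsto 1-T_0$, $T_1\mapsto 1-T_1$, the lowest total-degree homogeneous piece of $\Num_{\SS/\I_{\G_g}}(1-T_0,1-T_1)$ sits in total degree $m=\codim(\G_g)$ and reads $\sum_{p=0}^{m}\d{g}{p}\,T_0^{m-p}T_1^{p}$ by the very definition of multidegree, and similarly in total degree $m'$ for $g'$.

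The homogeneous component of total degree $m+m'$ of the product is then simply the product of these two lowest pieces, and a direct convolution gives the coefficient of $T_0^{m+m'-k}T_1^{k}$ as $\sum_{p+q=k}\d{g}{p}\d{g'}{q}$, the stated conventions handling the out-of-range indices. Since $\G_{\gm{g}{g'}}$ has codimension $m+m'$ in $\p{m+m'}_\kk\times\p{m+m'}_\kk$, this coefficient is $\d{\gm{g}{g'}}{k}$ by the definition of multidegree, which gives the claim.

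The main obstacle I anticipate is the opening step, the scheme-theoretic identification $\G_{\gm{g}{g'}}=\G_g\cap\G_{g'}$. Set-theoretically this is immediate from the block shape of $\Phi_{\gm{g}{g'}}$ together with the projective rescaling freedom in the $\mbf{y}$-coordinates, but the bi-homogeneous ideal equality $\I_{\G_{\gm{g}{g'}}}=\I_{\G_g}+\I_{\G_{g'}}$ requires a careful analysis of the defining relations of the graph (for instance via the $2\times 2$ minors built from $\Phi_{\gm{g}{g'}}$ together with the $\mbf{y}$-row, or via a presentation of the Rees algebra of $\I_{\gm{g}{g'}}$), and is precisely where the tensor-product hypothesis earns its keep by forcing $\G_g\cap\G_{g'}$ to be proper of codimension $m+m'$.
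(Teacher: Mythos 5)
Your core computation is exactly the paper's proof: the hypothesis that $\mathbb{F}_g\otimes\mathbb{F}_{g'}$ resolves $\SS/(\I_{\Gamma_g}+\I_{\Gamma_{g'}})$ forces the bi-graded Hilbert numerator to factor as the product of the two numerators, and since $\Gamma_g$ and $\Gamma_{g'}$ are irreducible of codimensions $m$ and $m'$, the total-degree-$(m+m')$ component of the product after the substitution $T_i\mapsto 1-T_i$ is the product of the two lowest-degree components, whence the convolution formula. One correction to your closing paragraph, though: the identification $\Gamma_{\gm{g}{g'}}=\Gamma_g\cap\Gamma_{g'}$ is \emph{not} ``immediate set-theoretically'' from the block shape of $\Phi_{\gm{g}{g'}}$ --- the paper exhibits, right after \Cref{prpKunneth}, an example in which $\mathbb{F}_g\otimes\mathbb{F}_{g'}$ does resolve $\Gamma_g\cap\Gamma_{g'}$ and yet $\Gamma_{\gm{g}{g'}}\subsetneq\Gamma_g\cap\Gamma_{g'}$ already as sets, so that the convolution formula fails for the actual projective degrees of the glued map ($10<11$). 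The lemma should therefore be read as computing the multidegree of $\Gamma_g\cap\Gamma_{g'}$; the link with the projective degrees of $\gm{g}{g'}$ is only established in the proof of \Cref{prpKunneth}, where the stronger hypothesis that $\mathbb{F}_g\otimes\mathbb{F}_{g'}$ resolves the graph of the glued map, combined with the always-valid inclusion $\Gamma_{\gm{g}{g'}}\subseteq\Gamma_g\cap\Gamma_{g'}$ (the graph cannot lie in components of $\p{}(\I_g)$ or $\p{}(\I_{g'})$ sitting over proper closed subschemes), forces the equality you were hoping for.
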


\begin{proof}
By assumptions, the numerator $\Num_{\SS/\I_{\Gamma_g\cap \Gamma_{g'}}}(T_0,T_1)$ of the Hilbert series of $\SS/\I_{\Gamma_g\cap \Gamma_{g'}}$ is the product $\Num_{g}(T_0,T_1)\Num_{g'}(T_0,T_1)$ of the numerators of the Hilbert series of respectively $\SS/\I_{g}$ and $\SS/\I_{g'}$.

Focusing on the homogeneous component of total degree $m+m'$ of $\Num_{\gm{g}{g'}}(1-T_0,1-T_1)$, we thus have:
\begin{align*}
\big(Num_{\SS/\I_{\Gamma_g\cap \Gamma_{g'}}}(1-T_0,&1-T_1)\big)_{m+m'}\\
&=\big(\Num_{g}(1-T_0,1-T_1)\Num_{g'}(1-T_0,1-T_1)\big)_{m+m'}\\
&=\big(\Num_{g}(1-T_0,1-T_1)\big)_m\big(\Num_{g'}(1-T_0,1-T_1)\big)_{m'}
\end{align*}
the last equality holds because $\big(\Num_{g}(1-T_0,1-T_1)\big)_m$ (resp. $\big(\Num_{g'}(1-T_0,1-T_1)\big)_{m'}$) is also the homogeneous component of smallest total degree of $\Num_{g}(1-T_0,1-T_1)$ (resp. $\Num_{g}'(1-T_0,1-T_1)$) as $\Gamma_g$, being the cone over the graph of $g$, is irreducible of codimension $m$ (resp. $\Gamma_{g'}$ is irreducible of codimension $m'$).
\end{proof}

Given these preliminaries and before proving \Cref{prpKunneth}, let us briefly underline why, when building a glued map $\gm{g}{g'}$, we restrict to inputted determinantal maps $g$ and $g'$.
\begin{dfn}[Hilbert-Burch matrix of a determinantal map]\label{dfnDetMap} Given that the base ideal $\I_f=(f_0:\ldots:f_n)\subset\msf{R}$ of $f$ verifies $\codim\big(\V(\I_f)\big)\geqslant 2$ and that $\I_f$ is the $n$-minors ideal of a $(n+1)\times n$-matrix, Hilbert-Burch theorem \cite[Theorem 20.15]{eisenbud1995algebra} states that $\I_f$ is the $n$-minors ideal of its presentation matrix $\Phi_f$, i.e.\ the presentation of $\I_f$ reads:
\begin{equation*}
\begin{tikzcd}[row sep=0.8em,column sep=1em,minimum width=2em]
0\ar[r]& \msf{R}^n\ar[r, "{\Phi_f}"]& \msf{R}^{n+1} \ar[rrr,"{(f_0 \; \ldots \; f_n)}"] & & & \I_f \ar{r}& 0.
\end{tikzcd}
\end{equation*} 
It motives why the matrix $\Phi_f$ is called the \emph{Hilbert-Burch matrix} of $f$ in the following, see for instance \cite{KusPolUl2011RatNormScrolls}, \cite{KusPolUl2013BiGrStructSymAlg} and \cite{KimMuk2020EqDefCertGraphs} for recent works about maps defined by Hilbert-Burch matrices.
\end{dfn}

When gluing two determinantal maps $g$ and $g'$ as in \Cref{prpKunneth}, the $(m+m')$-minors ideal of the concatenated matrix $\Phi_{\gm{g}{g'}}$ always defines a map $\p{m+m'}_\kk\dashrightarrow \p{m+m'}_\kk$ and, if $\codim \V\big(\I_{m+m'}(\Phi_{\gm{g}{g'}})\big)=2$, the base ideal $\I_{\gm{g}{g'}}$ of $\gm{g}{g'}$ is equal to $\I_{m+m'}(\Phi_{\gm{g}{g'}})$. Let us mention here that a glued map $\gm{g}{g'}$ can also be defined when $g$ and $g'$ are not necessarily determinantal however in this case $\gm{g}{g'}$ is not necessarily a map from $\p{m+m'}_\kk$ to $\p{m+m'}_\kk$.

Now recall that, algebraically, the graph $\G_f$ of a map $f=(f_0:\ldots:f_n):\p{n}_\kk\dasharrow\p{n}_\kk$ is the $\Proj$ of the Rees algebra $\R(\I_f)=\underset{k\geqslant 0}{\oplus}\I_f^kt^k$ of the base ideal $\I_f$ of $f$ and the embedding of $\G_f$ in $\p{n}_\kk\dasharrow\p{n}_\kk$ is defined by the surjection $\msf{S}\twoheadrightarrow \R(\I_f)$ sending the variables $\msf{y}_i$ to $f_it$ ($\msf{S}=\msf{R}[\msf{y}_0,\ldots,\msf{y}_n]$). In practice, the kernel $J$ of the latter surjection may be difficult to compute and an approximation of $\J$ is more accessible by considering the symmetric algebra of $\I_f$, this is a classical approach that we now summarize briefly in our context of rational maps, see \cite{Vasconcelos2005Int} for an introduction in a broader context and pointers to references about this procedure. The presentation matrix $\Phi_f$ of $\I_f$, by definition verifying the following short exact sequence:
\begin{equation*}
\begin{tikzcd}[row sep=0.8em,column sep=1em,minimum width=2em]
\msf{R}^{n'}\ar[r, "{\Phi_f}"]& \msf{R}^{n+1} \ar[rrr,"{(f_0 \; \ldots \; f_n)}"] & & & \I_f \ar{r}& 0,
\end{tikzcd}
\end{equation*} defines an embedding of $\p{}(\I_f)=\Proj\big(\Sym(\I_f)\big)$ in $\p{n}_\kk\times\p{n}_\kk$ whose ideal $\J_1$ is generated by the $n'$ entries of the line matrix $(\msf{y}_0 \; \ldots \; \msf{y}_n)\Phi_f$ (here we consider that $\Phi_f$ is a matrix both in $\msf{R}$ and $\msf{S}$). Since the natural surjection $\Sym(\I_f)\twoheadrightarrow\R(\I_f)$ factorizing the maps $\I_f^{\otimes k}\twoheadrightarrow\I_f^k$ defines an embedding of $\G_f$ in $\p{}(\I_f)$, one has that $\J_1\subset \J$, i.e.\ $\J_1$ provides some equations of $\G_f$. The ideal $\I_f$ is said to be of \emph{linear type} when $\J_1= \J$.

When $f$ is determinantal of Hilbert-Burch matrix $\Phi_f\in\msf{R}^{(n+1)\times n}$, the kernel of $\Sym(\I_f)\twoheadrightarrow\R(\I_f)$ is the $\msf{R}$-torsion of $\Sym(\I_f)$ \cite{Micali1964AlgUniv} and are described by the Fitting ideals of $\I_f$, i.e.\ by the ideals $\I_k(\Phi_f)$ for $k\in\lbrace 1,\ldots,n-1\rbrace$ of $k$-minors ideals of the presentation matrix $\Phi_f$ of $\I_f$, see \cite[Prop. 1.1 and below]{Vasconcelos2005Int} and \cite{BuChJo2009TorSymAlg}. Hence, the irreducible components of  $\p{}(\I_f)$ are the graph $\Gamma_f$ of $f$ and eventual additional pieces lying above closed strict subschemes of the source space $\p{n}_\kk$ of $f$.

With all these facts, let us now show \Cref{prpKunneth}.

\begin{proof}[Proof of \Cref{prpKunneth}]
Assume that $\codim \V\big(\I_{m+m'}(\Phi_{\gm{g}{g'}})\big)=2$ so the base ideal $\I_{\gm{g}{g'}}$ of the glued map $\gm{g}{g'}:\p{m+m'}_\kk\dashrightarrow \p{m+m'}_\kk$ is the $(m+m')$-minors ideal of the matrix $\Phi_{\gm{g}{g'}}$. It is clear that $\Gamma_{\gm{g}{g'}}$ is included in $\Gamma_g\cap \Gamma_{g'}$. Indeed, by definition, $\Gamma_{\gm{g}{g'}}\subset \p{}(\I_{\gm{g}{g'}})=\p{}(\I_{g})\cap \p{}(\I_{g'})$ so $\Gamma_{\gm{g}{g'}}\subset \p{}(\I_{g})$ (resp. $\Gamma_{\gm{g}{g'}}\subset \p{}(\I_{g'})$) and since $\Gamma_{\gm{g}{g'}}$ cannot be included in a component of $\p{}(\I_{g})$ (resp. $\p{}(\I_{g'})$) lying above a closed strict subscheme of $\p{m+m'}_\kk$, it is necessarily included in $\Gamma_{g}$ (resp. $\Gamma_{g'}$).

Now, since, by assumption, $\mathbb{F}_g\otimes\mathbb{F}_{g'}$  is a resolution of $\Gamma_{\gm{g}{g'}}$ and $\Gamma_{\gm{g}{g'}}\subset \Gamma_g\cap \Gamma_{g'}$, one has $\Gamma_{\gm{g}{g'}}= \Gamma_g\cap \Gamma_{g'}$.

The conclusion of \Cref{prpKunneth} follows then from \Cref{lmmKunneth}.
\end{proof}

Note that, as set, $\Gamma_{\gm{g}{g'}}$ and $\Gamma_g\cap \Gamma_{g'}$ are not always equal as illustrated by the following example.

\begin{ex}
Let $\Phi_g=\begin{pmatrix}
x_2 & 0 \\
x_1 & x_0x_2 \\
 0  & x_1^2
\end{pmatrix}$ and $\Phi_{g'}=\begin{pmatrix}
x_3 & 0 \\
x_4  & x_2x_3 \\
 0  & x_4^2
\end{pmatrix}$ and $\Phi_{\gm{g}{g'}}$ be the concatenate matrix of $\Phi_g$ and $\Phi_{g'}$ as in \Cref{prpKunneth}. Then, as it can be checked by direct computation on a computer algebra system such as \textsc{Macaulay2}, one has that $\codim\V\big(\Phi_{\gm{g}{g'}})\big)\geqslant 2$ and that $\d{\gm{g}{g'}}{}=(1,6,10,6,1)$, $\d{g}{}=\d{g'}{}=(1,3,1)$ and \[\d{\gm{g}{g'}}{2}=10<11=\d{g}{0}\d{g'}{2}+\d{g}{1}\d{g'}{1}+\d{g}{2}\d{g'}{0}.\]
In this example $\mathbb{F}_g\otimes\mathbb{F}_{g'}$ provides a bigraded free resolution of $\Gamma_g\cap \Gamma_{g'}$ and $\Gamma_{\gm{g}{g'}}\subsetneq \Gamma_g\cap \Gamma_{g'}$.
\end{ex}

Let us also mention that there exist situations where the ideal of $\Gamma_g\cap \Gamma_{g'}$ decomposes as the intersection of the ideal of $\Gamma_{\gm{g}{g'}}$ and the ideal of embedded components, so that $\Gamma_{\gm{g}{g'}}$ and $\Gamma_g\cap \Gamma_{g'}$ are not scheme-theoretically equal but have the same projective degrees.

Even if it might seem a bit artificial (since the results about the standard Cremona maps are well known, in particular its projective degrees, and computable by other means, see \cite{PanGonzalez2005CarClassesDetCremTrans}), we illustrate an application of \Cref{prpKunneth} in the example of the standard Cremona maps $\tau_{m+m'}=\gm{\tau_m}{\tau_{m'}}$.
\begin{prp}\label{prpAppliKunnethStdCrem}
Following \Cref{ntt}, let \begin{align*}
&\tau_m=(x_1\cdots x_m:\ldots:x_0\cdots x_{m-1}):\p{m}_\kk\dashrightarrow \p{m}_\kk\\
&\tau_{m'}=(x_{m+1}\cdots x_{m+m'}:\ldots:x_m\cdots x_{m+m'-1}):\p{m'}_\kk\dashrightarrow \p{m'}_\kk
\end{align*} with associated Hilbert-Burch matrices $\Phi_{\tau_m}\in\RR_m^{(m+1)\times m}$ and $\Phi_{\tau_{m'}}\in\RR_{m'}^{(m'+1)\times m'}$. Let also $\Phi_{\gm{\tau_m}{\tau_{m'}}}\in \RR^{(m+m'+1)\times (m+m')}$ be as in \Cref{prpKunneth}. Then:
\begin{enumerate}
\item\label{item1CrmStd} $\codim\V(\I_{m+m'}\big(\Phi_{\gm{\tau_m}{\tau_{m'}}})\big)=2$,
\item\label{item2CrmStd}  the ideal of $\p{}(\I_{\gm{\tau_m}{\tau_{m'}}})$ is minimally resolved by $\mathbb{F}_{\tau_m}\otimes \mathbb{F}_{\tau_{m'}}$ which is the Koszul complex on the entries of the line matrix $\begin{pmatrix}
y_0 & \ldots & y_{m+m'}
\end{pmatrix}\Phi_{\gm{\tau_m}{\tau_{m'}}}$.
\item\label{item3CrmStd} $\p{}(\I_{\gm{\tau_m}{\tau_{m'}}})=\Gamma_{\gm{\tau_m}{\tau_{m'}}}$ and, consequently, for any $k\in\lbrace 0,\ldots,m+m'\rbrace $
\begin{equation}\label{eqProjDegStdCrem}
\d{\tau_{m+m'}}{k}=\underset{p=0}{\overset{k}{\sum}}\d{\tau_m}{p}\d{\tau_{m'}}{k-p}=\binom{m+m'}{k}.
\end{equation}
\end{enumerate}
\end{prp}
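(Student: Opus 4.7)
My strategy is to recognize that the concatenated matrix $\Phi_{\gm{\tau_m}{\tau_{m'}}}$ coincides with the Hilbert--Burch matrix $\Phi_{\tau_n}$ of $\tau_n$ (with $n=m+m'$): the shared row $m+1$ combines the bottom entry $-x_m$ of $\Phi_{\tau_m}$ with the top entry $x_m$ of $\Phi_{\tau_{m'}}$, exactly reproducing the middle row of $\Phi_{\tau_n}$. Part (1) then follows because the $n$-minors of $\Phi_{\tau_n}$ generate the base ideal $\I_{\tau_n}=(m_0,\ldots,m_n)$ with $m_i=\prod_{k\neq i}x_k$, whose vanishing locus is the union of the coordinate subspaces $V(x_i,x_j)$ for $i\neq j$, of codimension exactly $2$.

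For part (2), a direct computation shows that the $j$-th entry of $\begin{pmatrix} y_0 & \ldots & y_n \end{pmatrix}\Phi_{\tau_n}$ is the bidegree--$(1,1)$ binomial $g_j:=y_{j-1}x_{j-1}-y_jx_j$, for $j=1,\ldots,n$. The crucial step is to show $(g_1,\ldots,g_n)$ is a regular sequence in $\SS$: since $\SS$ is Cohen--Macaulay of dimension $2n+2$, it suffices to verify that $V(g_1,\ldots,g_n) \subset \mathbb{A}_\kk^{2n+2}$ has dimension $n+2$. The equations force $x_0y_0=x_1y_1=\cdots=x_ny_n$, and on the open locus where this common value is nonzero the $x_i$ are free with all $x_i \neq 0$ while a single $y_j$ determines the rest via $y_j = x_0y_0/x_j$, yielding one irreducible component of the expected dimension $n+2$. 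Therefore $K(g_1,\ldots,g_n)$ is a minimal bigraded free resolution of $\SS/\J_1 = \Sym(\I_{\tau_n})$, and since Koszul complexes on disjoint blocks of elements factor as tensor products we obtain $K(g_1,\ldots,g_n) = K(g_1,\ldots,g_m)\otimes K(g_{m+1},\ldots,g_n) = \mathbb{F}_{\tau_m}\otimes\mathbb{F}_{\tau_{m'}}$.

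For part (3), $\Sym(\I_{\tau_n})$ is a complete intersection, hence Cohen--Macaulay with all associated primes minimal of height $n$. Any top-dimensional minimal prime other than the ideal $P$ of the ``main'' component must lie in the locus where some -- hence all -- $x_iy_i=0$; this forces $n+1$ independent vanishing conditions $x_j=0$ or $y_j=0$ and so has codimension $\geq n+1 > n$, leaving $P$ as the unique minimal prime. The main component dominates $\Spec(\RR)$, so $P\cap \RR = (0)$; by Cohen--Macaulayness the zerodivisors of $\Sym(\I_{\tau_n})$ all lie in $P$, so any nonzero element of the $\RR$-torsion $K = \ker(\Sym(\I_{\tau_n}) \twoheadrightarrow \R(\I_{\tau_n}))$ would be killed by some nonzero $r \in P \cap \RR = (0)$, a contradiction. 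Hence $K = 0$, $\I_{\tau_n}$ is of linear type, and $\p{}(\I_{\gm{\tau_m}{\tau_{m'}}}) = \Gamma_{\gm{\tau_m}{\tau_{m'}}}$. \Cref{prpKunneth} then gives the convolution in \eqref{eqProjDegStdCrem}, and the closed form $\d{\tau_n}{k} = \binom{n}{k}$ follows either by induction on $n$ (base case $\tau_1 = (x_1:x_0)$) plus Vandermonde's identity, or directly from the Koszul Hilbert series $\frac{(1-T_0T_1)^n}{(1-T_0)^{n+1}(1-T_1)^{n+1}}$, whose numerator evaluated at $(1-T_0,1-T_1)$ has degree-$n$ component $(T_0+T_1)^n$.

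The main technical subtlety lies in part (3): ruling out extraneous top-dimensional components requires the explicit codimension comparison sketched above, after which the torsion argument is essentially automatic from Cohen--Macaulayness, and the remaining steps reduce to assembling \Cref{lmmKunneth} with elementary binomial identities.
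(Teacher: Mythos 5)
Your proof is correct, but it takes a genuinely different route from the paper's. The paper argues by induction on $m,m'$ and establishes items (2) and (3) through explicit Gr\"obner-basis computations: it checks $\Tor^1(\SS/\mathcal{I}_m,\SS/\mathcal{I}_{m'})=\mathcal{I}_m\cap\mathcal{I}_{m'}/\mathcal{I}_m\cdot\mathcal{I}_{m'}=0$ by eliminating $t$ from $t\mathcal{I}_m+(1-t)\mathcal{I}_{m'}$ and reducing $S$-polynomials, invokes $\Tor$-rigidity to kill the higher $\Tor$'s, and then runs a primality test on a Gr\"obner basis of $\mathcal{I}_{m+m'}$ to identify $\p{}(\I_{\gm{\tau_m}{\tau_{m'}}})$ with the graph. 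You instead observe that $\Phi_{\gm{\tau_m}{\tau_{m'}}}=\Phi_{\tau_{m+m'}}$ and work directly with the bilinear forms $g_j=x_{j-1}y_{j-1}-x_jy_j$: a dimension count shows they form a regular sequence, so the full Koszul complex --- which tautologically factors as $\mathbb{F}_{\tau_m}\otimes\mathbb{F}_{\tau_{m'}}$ --- resolves $\Sym(\I_{\tau_{m+m'}})$, and the Cohen--Macaulay/unmixedness argument shows the $\RR$-torsion vanishes, i.e.\ $\I_{\tau_{m+m'}}$ is of linear type. This is essentially the ``linear type'' proof the paper alludes to right after the proposition; it is cleaner and computation-free, whereas the paper's Gr\"obner/$\Tor$ scheme is the one intended to generalize to gluings whose concatenated matrix is not itself a standard Cremona matrix (cf.\ the conjectural generalization closing \Cref{SecKunneth}). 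One presentational point: in your part (2) you only exhibit \emph{one} component of $\V(g_1,\ldots,g_n)$ of the expected dimension $n+2$, which by itself does not bound $\dim \V(g_1,\ldots,g_n)$ from above; the needed estimate is that every other component lies in $\V(x_0y_0,\ldots,x_ny_n)$, which has codimension $n+1>n$. You do state exactly this bound in part (3), so the argument is complete, but it belongs already in part (2) where the regularity of the sequence is claimed.
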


\begin{proof}
Let $m,m'\geqslant 1$ and \begin{equation}\label{eqMatGlue}
\Phi_{\tau_{m+m'}}=\begin{scriptsize}\begin{pmatrix}
   x_0  &    0   & \ldots & \ldots & 0      \\
   -x_1 &   x_1  & \ddots &        & \vdots \\
    0   &  -x_2  & \ddots & \ddots & \vdots \\
 \vdots & \ddots & \ddots & \ddots &    0   \\
 \vdots &        & \ddots & \ddots & x_{m+m'-1}\\
    0   & \ldots & \ldots &    0   & -x_{m+m'}.
\end{pmatrix}\end{scriptsize}
\end{equation}

Since \[\I_{m+m'}(\Phi_{\gm{\tau_{m}}{\tau_{m'}}})=(x_1\cdots x_{m+m'},\ldots,x_0\cdots x_{m+m'-1}),\] one has that $\codim \V\big(\I_{m+m'}(\Phi_{\gm{\tau_{m}}{\tau_{m'}}})\big)=2$ (the generators of $\I_{m+m'}(\Phi_{\gm{\tau_{m}}{\tau_{m'}}})$ do not share a common factor) and thus \Cref{item1CrmStd} is verified.

The proof of \Cref{item2CrmStd} and \Cref{item3CrmStd} relies on an induction on $m$ and $m'$. The inductive step consists in first showing that the tensor product $\mathbb{F}_{\tau_m}\otimes \mathbb{F}_{\tau_{m'}}$ of a free resolution $\mathbb{F}_{\tau_m}$ of $\p{}(\I_{\tau_{m}})$ and a free resolution $\mathbb{F}_{\tau_{m'}}$ of $\p{}(\I_{\tau_{m'}})$ provides a free resolution of $\p{}(\I_{\tau_{m}})\cap \p{}(\I_{\tau_{m'}})=\p{}(\I_{\gm{\tau_{m}}{\tau_{m'}}})$. The second step consists in showing that $\p{}(\I_{\gm{\tau_{m}}{\tau_{m'}}})$ is irreducible or, in other words, that $\I_{\p{}(\I_{\tau_{m}})}+\I_{\p{}(\I_{\tau_{m'}})}$ is prime. This latter property insures that \[\Gamma_{\tau_m}\cap \Gamma_{\tau_{m'}}=\p{}(\I_{\tau_{m}})\cap\p{}(\I_{\tau_{m'}})=\p{}(\I_{\gm{\tau_{m}}{\tau_{m'}}})=\Gamma_{\gm{\tau_{m}}{\tau_{m'}}}\] and \Cref{eqProjDegStdCrem} follows then from a direct application of \Cref{prpKunneth}.

\begin{itemize}
\item Initial case: \Cref{item2CrmStd} and \Cref{item3CrmStd} are verified in the case $m=1$, this follows from the fact that $\p{}(\I_{\tau_1})=\V(x_0y_0-x_1y_1)$ whose minimal free resolution is the Koszul complex on the single irreducible polynomial $x_0y_0-x_1y_1\in \kk[x_0,x_1,y_0,y_1]$.
\item Inductive step: let $m,m'\geqslant 1$ and assume that \Cref{item2CrmStd} and \Cref{item3CrmStd} hold for $\tau_m$ and $\tau_{m'}$. In particular the ideal $\mathcal{I}_{m}$ of $\p{}(\I_{\tau_{m}})=\Gamma_{\tau_{m}}$ (resp. the ideal $\mathcal{I}_{m'}$ of $\p{}(\I_{\tau_{m'}})=\Gamma_{\tau_{m'}}$) is minimally resolved by the Koszul complex $\mathbb{F}_{\tau_{m}}$ on the entries of the line matrix $\begin{pmatrix}
y_{0} & \ldots & y_{m}
\end{pmatrix}\Phi_{\tau_{m}}$ (resp. by the Koszul complex $\mathbb{F}_{\tau_{m'}}$ on the entries of the line matrix $\begin{pmatrix}
y_{m} & \ldots & y_{m'}
\end{pmatrix}\Phi_{\tau_{m'}}$). We show \Cref{item2CrmStd} and \Cref{item3CrmStd} via classical methods involving Gr\"obner bases and we refer to \cite{AL1994GB} for the associated definitions associated to this argument. Remark that $\mathbb{F}_{\tau_{m}}\otimes \mathbb{F}_{\tau_{m+m'}}$ is the Koszul complex on the entries of $\begin{pmatrix}
y_{0} & \ldots & y_{m+m'}
\end{pmatrix}\Phi_{\gm{\tau_{m}}{\tau_{m'}}}$ and one has that $\mathbb{F}_{\tau_{m}}\otimes \mathbb{F}_{\tau_{m'}}$ resolves the ideal of $\p{}(\I_{\gm{\tau_{m}}{\tau_{m'}}})$ if and only if
\[\forall i\geqslant 1,\Tor^i\big(\SS/\mathcal{I}_m,\SS/\mc{I}_{m'}\big)=0,\] $\SS$ being the polynomial ring $\kk[x_0,\ldots,x_{m+m'},y_0,\ldots,y_{m+m'}]$. Actually, by $\Tor$ rigidity, the latter conditions are verified if and only if \[\Tor^1\big(\SS/\mathcal{I}_{m},\SS/\mc{I}_{m'}\big)=\mathcal{I}_{m}\cap \mathcal{I}_{m'}/\mathcal{I}_{m}\cdot \mathcal{I}_{m'}=0.\]
To show this last condition, i.e.\ to show that \[\mathcal{I}_{m}\cap \mathcal{I}_{m'}=\mathcal{I}_{m}\cdot \mathcal{I}_{m'},\] we compute $\mathcal{I}_{m}\cap \mathcal{I}_{m'}$ by eliminating $t$ in the ideal $t\mathcal{I}_{m}+(1-t)\mathcal{I}_{m'}\subset\SS[t]$ see \cite[Prop. 2.3.5]{AL1994GB} for this standard use of Gr\"obner bases. Using the graded reverse lexicographic order, the variable $t$ being bigger than the $x$ and $y$ variables, and denoting
\begin{align*}
&\forall i\in\lbrace 0,\ldots,m-1\rbrace,\;g_i=t(x_iy_i-x_{i+1}y_{i+1}),\\
&\forall j\in\lbrace m,\ldots,m+m'\rbrace,\;g_j=(1-t)(x_jy_j-x_{j+1}y_{j+1}).
\end{align*}
and $\mathcal{G}=\lbrace g_0,\ldots,g_{m+m'}\rbrace$, a direct computation shows that for any $i\in\lbrace 0,\ldots,m-1\rbrace$ and $j\in\lbrace m,\ldots,m+m'\rbrace$ one has that $S(g_i,g_j)$ reduces to $(x_iy_i-x_{i+1}y_{i+1})(x_jy_j-x_{j+1}y_{j+1})\in \mc{I}_m\cdot \mc{I}_{m'}$ modulo $g_0,\ldots,g_{m+m'}$ (where $S(g_i,g_j)$ is the $S$-polynomial associated to $g_i$ and $g_j$). Since $S(g_i,g_{i'})$ reduces to $0$ modulo $g_0,\ldots,g_{m+m'}$ if $i,i'$ are both elements of $\lbrace 0,\ldots,m-1\rbrace$ or both elements $\lbrace m,\ldots,m+m'\rbrace$ and since $S(g_i,S(g_{i'},g_j))$ reduces to $0$ modulo $g_0,\ldots,g_{m+m'}, S(g_0,g_{m}),\ldots, S(g_{m-1},g_{m+m'-1})$, one has then that \[(t\mathcal{I}_{m}+(1-t)\mathcal{I}_{m'})\cap \SS\subset \mathcal{I}_{m}\cdot \mathcal{I}_{m'}\] which concludes the first step and shows \Cref{item2CrmStd}.

We shows that the ideal $\mathcal{I}_{m+m'}$ of $\p{}(\I_{\gm{\tau_{m}}{\tau_{m'}}})$ (which is generated by the entries $\begin{pmatrix}
y_0 & \ldots & y_{m+m'}
\end{pmatrix}\Phi_{\gm{\tau_m}{\tau_{m'}}}$) is prime again by computing Gr\"obner basis. More precisely, a direct computation shows that the set $\mathcal{H}=\lbrace h_0=x_0y_0-x_1y_1,\ldots,h_{m+m'}=x_{m+m'-1}y_{m+m'-1}-x_{m+m'}y_{m+m'}\rbrace$ is a Gr\"obner basis of $\mathcal{I}_{m+m'}$ (all $S$-polynomials in the $h_i$ reduces to $0$ modulo $h_0,\ldots,$ $h_{m+m'-1}$). We show that $\mathcal{I}_{m+m'}$ is prime by applying the primality test \cite[Algoritm 4.4.1]{AL1994GB} since, given $i\in \lbrace 0\ldots,m+m'-1\rbrace$, $h_ix_iy_i-x_{i+1}y_{i+1}$ is irreducible in $\kk'[x_i]$ where $\kk'$ is the quotient field of the ring $\kk[x_{i+1},\ldots,x_n,y_0,\ldots,y_n]/(h_{i+1},\ldots,h_{m+m'-1})$. 
\end{itemize}
The last equality \eqref{eqProjDegStdCrem} follows then from applying classical formulas between binomial numbers.
\end{proof}

Let us emphasize again that all the previous results are well known and could be summed up by the fact that the base ideal of the standard Cremona maps is of linear type (see for instance \cite[Subsection 2.1]{RussoSimis2001OnBirMap}). However the scheme of our proof of \Cref{prpAppliKunnethStdCrem}, could virtually be applied to more general situations. Even if it is at the moment out of our reach, let us present the kind of situations we have in mind and that we verified experimentally in a all the examples we considered.

\begin{cjc}\label{prpAppliKunnethGen}
Following \Cref{ntt} about glued maps, assume moreover that $\kk$ is algebraically closed, and let
\[
\begin{tikzpicture}[
style1/.style={
  matrix of math nodes,
  every node/.append style={text width=#1,align=center,minimum height=5ex},
  nodes in empty cells,
  left delimiter=(,
  right delimiter=),
  }
  ]
\matrix[style1=0.85cm] (1mat)
{
  &   \\
  &  \\
  &  \\
};
\draw[dashed]
  (1mat-2-2.west) -- (1mat-2-2.east);
\draw[dashed]
  (1mat-2-1.south west) -- (1mat-2-1.south east);
\draw[dashed]
  (1mat-1-1.north east) -- (1mat-3-1.south east);

\node
  at (1mat-1-1.south) {$\Phi_g$};
\node
  at (1mat-1-2) {$0_{m\times m'}$};
\node 
  at (1mat-3-1) {$0_{m'\times m}$};
\node
  at (1mat-3-2) {$\Phi_{g'}$};

\begin{scriptsize}
\draw[decoration={brace,raise=12pt},decorate]
  (1mat-1-2.north east) -- 
  node[right=15pt] {$m$} 
  (1mat-2-2.east);
\draw[decoration={brace,amplitude=5pt,raise=12pt},decorate]
  (1mat-2-2.east) -- 
  node[right=15pt] {$m'+1$} 
  (1mat-3-2.south east);
\draw[decoration={brace,raise=7pt},decorate]
  (1mat-1-1.north west) -- 
  node[above=8pt] {$m$} 
  (1mat-1-1.north east);
\draw[decoration={brace,raise=7pt},decorate]
  (1mat-1-2.north west) -- 
  node[above=8pt] {$m'$} 
  (1mat-1-2.north east);
  
\draw[decoration={mirror, brace,amplitude=5pt,raise=11pt},decorate]
  (1mat-1-1.north west) -- 
  node[left=14pt] {$m+1$} 
  (1mat-2-1.south west);
\draw[decoration={mirror, brace,raise=12pt},decorate]
  (1mat-3-1.north west) -- 
  node[left=14pt] {$m'$} 
  (1mat-3-1.south west);
 
\end{scriptsize}
\node [left=70pt]{$\Phi_{\gm{g}{g'}}=(\Phi_{ij})_{\genfrac{}{}{0pt}{}{0\leqslant i \leqslant m+m'+1}{1\leqslant j \leqslant m+m'}}=$};
\node[right=60pt] {$\in\RR^{(m+m'+1)\times (m+m') }$};
\end{tikzpicture}
\]
be the matrix defined by the following data:
\begin{itemize}
\item for any $j\in\lbrace 1,\ldots,m\rbrace$, let $k_j\geqslant 2$, $\lambda_{1,j},\ldots,\lambda_{k_j,j}\in\RR_m=\kk[x_0,\ldots,x_m]\subset\RR$ and:
\begin{itemize}
\item for $i\in\lbrace 1,\ldots,m+1\rbrace$, let $\Phi_{ij}\in |\lambda_{1,j},\ldots,\lambda_{k_j,j}|$ be a general linear combination of $\lambda_{1,j},\ldots,\lambda_{k_j,j}$,
\item for $i\in\lbrace m+2,\ldots,m+m'+1\rbrace$, $\Phi_{ij}=0$.
\end{itemize}

\item for any $j\in\lbrace m+1,\ldots,m+m'\rbrace$, let $k_j\geqslant 2$ and $\lambda_{1,j},\ldots,\lambda_{k_j,j}\in\RR_{m'}=\kk[x_m,\ldots,x_{m+m'}]\subset\RR$ and:
\begin{itemize}
\item for $i\in\lbrace 1,\ldots,m\rbrace$, $\Phi_{ij}=0$,
\item for $i\in\lbrace m+1,\ldots,m+m'+1\rbrace$, let $\Phi_{ij}\in |\lambda_{1,j},\ldots,\lambda_{k_j,j}|$ be a general linear combination of $\lambda_{1,j},\ldots,\lambda_{k_j,j}$.
\end{itemize}
\end{itemize}
Let also $\Phi_{g}=(\Phi_{ij})_{\genfrac{}{}{0pt}{}{0\leqslant i \leqslant m+1}{1\leqslant j \leqslant m}}\in\RR_m^{(m+1)\times m}$ and $\Phi_{g'}=(\Phi_{ij})_{\genfrac{}{}{0pt}{}{m+1\leqslant i \leqslant m+m'+1}{m+1\leqslant j \leqslant m+m'}}\in\RR_n^{(m'+1)\times m'}$ and $\gm{g}{g'}:\p{m+m'}\dashrightarrow\p{m+m'}$, $g:\p{m}\dashrightarrow\p{m}$ and $g':\p{m'}\dashrightarrow\p{m'}$ be the determinantal maps defined by $\Phi_{\gm{g}{g'}}$, $\Phi_g$ and $\Phi_{g'}$ (where $\p{m}=\Proj(\RR_m)\subset\Proj(\RR)=\p{m+m'}$ and $\p{m'}=\Proj(\RR_{m'})\subset\Proj(\RR)=\p{m+m'}$).

Then for all $i\in\lbrace 0,\ldots,m+m'\rbrace$:
\begin{equation}\label{eqKunnethCP}\d{\gm{g}{g'}}{k}=\underset{p=0}{\overset{k}{\sum}}\d{g}{p}\d{g'}{k-p}.\end{equation}
\end{cjc}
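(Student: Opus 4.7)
The plan is to follow the template of the proof of \Cref{prpAppliKunnethStdCrem} and reduce \Cref{prpAppliKunnethGen} to \Cref{prpKunneth}. This requires establishing two statements: (a) the tensor product $\mathbb{F}_g\otimes\mathbb{F}_{g'}$ of bigraded free resolutions of $\p{}(\I_g)$ and $\p{}(\I_{g'})$ resolves the intersection $\p{}(\I_g)\cap\p{}(\I_{g'})$, which always contains $\p{}(\I_{\gm{g}{g'}})$; and (b) this intersection is scheme-theoretically equal to $\Gamma_{\gm{g}{g'}}$, so that the K\"unneth-type Hilbert series identity of \Cref{lmmKunneth} transfers from the symmetric algebras to the Rees algebra of $\I_{\gm{g}{g'}}$.

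For (a), I would pass, via $\Tor$-rigidity, to the equivalent algebraic statement $\Tor^1_\SS(\SS/\mathcal{I}_g, \SS/\mathcal{I}_{g'}) = 0$, i.e.\ $\mathcal{I}_g\cap\mathcal{I}_{g'} = \mathcal{I}_g\cdot\mathcal{I}_{g'}$, where $\mathcal{I}_g$ (resp.\ $\mathcal{I}_{g'}$) is the ideal of $\p{}(\I_g)$ (resp.\ $\p{}(\I_{g'})$) generated by the $m$ entries of $(y_0,\ldots,y_m)\Phi_g$ (resp.\ the $m'$ entries of $(y_m,\ldots,y_{m+m'})\Phi_{g'}$). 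The structural point to exploit is that these two generating sets have almost disjoint variable supports, sharing only $x_m$ and $y_m$. This near-disjointness should power a Gr\"obner basis argument in the spirit of the standard Cremona proof: after eliminating $t$ from the ideal $t\mathcal{I}_g + (1-t)\mathcal{I}_{g'} \subset \SS[t]$ in a graded reverse-lex order where $t$ is the largest variable, all cross $S$-polynomials $S(g_i,g'_j)$ between a left generator and a right generator should reduce, modulo the obvious syzygies already produced by the left-left and right-right pairs, to elements of $\mathcal{I}_g\cdot\mathcal{I}_{g'}$.

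For (b), I would try to show that the ideal $\mathcal{I}_g + \mathcal{I}_{g'}$ is prime, which would force $\p{}(\I_{\gm{g}{g'}})$ to be irreducible and hence equal to $\Gamma_{\gm{g}{g'}}$, using the primality test \cite[Algorithm 4.4.1]{AL1994GB} iteratively. Granting that $\mathcal{I}_g$ is prime --- which should follow from the same genericity hypothesis applied to $\Phi_g$ alone, possibly by induction on $m$ --- the quotient $\SS_m/\mathcal{I}_g$ is a domain, and the generators of $\mathcal{I}_{g'}$, viewed as polynomials in one of the $y_{m+1},\ldots,y_{m+m'}$ at a time over the appropriate fraction field, should remain irreducible by genericity of the $\lambda_{i,j}$.

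The main obstacle will be step (b). The example immediately following \Cref{prpKunneth} shows that without genericity $\Gamma_g\cap\Gamma_{g'}$ can properly contain $\Gamma_{\gm{g}{g'}}$, so the entire role of the genericity hypothesis is to rule out the additional components of $\p{}(\I_g)\cap\p{}(\I_{g'})$ lying over proper closed subsets of $\p{m+m'}_\kk$. Translating \emph{general linear combination of the $\lambda_{i,j}$} into a workable algebraic criterion --- for instance via conditions on the Fitting ideals $\I_k(\Phi_g)$ and $\I_k(\Phi_{g'})$ for $k<m,m'$, or via a $G_\infty$-type condition on $\I_g$ and $\I_{g'}$ controlling the loci where extra symmetric algebra components may appear --- is where the bulk of the work lies, and is presumably the reason the statement is presented only as a conjecture.
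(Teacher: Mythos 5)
The first thing to say is that the paper contains no proof of this statement: it is deliberately labelled a conjecture, the author writes that the scheme of the proof of \Cref{prpAppliKunnethStdCrem} ``could virtually be applied'' here but ``is at the moment out of our reach'', and the evidence offered is purely experimental. Your plan is precisely that scheme --- reduce to \Cref{prpKunneth} via \Cref{lmmKunneth} by proving (a) $\Tor$-vanishing through an elimination/Gr\"obner computation and (b) irreducibility via the primality test of \cite[Algorithm 4.4.1]{AL1994GB} --- so you have correctly reconstructed the intended strategy and correctly diagnosed that the translation of ``general linear combination of the $\lambda_{i,j}$'' into workable algebra is the open point. But as written your proposal is a programme, not a proof: every substantive step in (a) and (b) is asserted with ``should'', and for arbitrary $\lambda_{i,j}$ there is no analogue of the explicit binomials $x_iy_i-x_{i+1}y_{i+1}$ that make the computation in \Cref{prpAppliKunnethStdCrem} finite and checkable.

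Beyond incompleteness, step (b) as you formulate it would fail. You propose to prove that $\mathcal{I}_g+\mathcal{I}_{g'}$ is prime, where $\mathcal{I}_g$ and $\mathcal{I}_{g'}$ are the symmetric-algebra ideals generated by the entries of $(y_0\;\ldots\;y_m)\Phi_g$ and $(y_m\;\ldots\;y_{m+m'})\Phi_{g'}$; this presupposes in particular that $\mathcal{I}_g$ and $\mathcal{I}_{g'}$ are themselves prime, i.e.\ that $\p{}(\I_g)=\Gamma_g$ and $\p{}(\I_{g'})=\Gamma_{g'}$ (linear type). The conjecture allows the $\lambda_{i,j}$ to be polynomials of arbitrary degree, and for such maps linear type genuinely fails: in the instance treated by \Cref{prpMMGluedMap}, the factor $g'$ has Hilbert--Burch degrees $(1,d)$ and $\d{g'}{0}=1$, whereas the complete-intersection count for $\p{}(\I_{g'})$ gives $\sigma_{2,2}(1,d)=d$, so for $d\geqslant 2$ one has $\Gamma_{g'}\subsetneq\p{}(\I_{g'})$ and $\mathcal{I}_{g'}$ is not the Rees ideal. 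The discussion after \Cref{prpEstimMDet} makes the same point: a determinantal \emph{Cremona} map of linear type must have all Hilbert--Burch degrees equal to $1$. So to invoke \Cref{prpKunneth} in the stated generality you must run both (a) and (b) on the Rees ideals $\I_{\Gamma_g}$ and $\I_{\Gamma_{g'}}$, whose generators are not supplied by the presentation matrices --- this is the missing idea, and it is why the paper's only actual progress on instances of this conjecture (\Cref{prpMMGluedMap}) abandons the $\Tor$/primality route altogether in favour of the mixed-volume criterion of \Cref{prpEqMMMV}, which tolerates extra components of $\p{}(\I_f)$ provided they sit over the coordinate hyperplanes.
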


\section{Projective degrees vs mixed volumes of Koszul-determinantal maps}\label{SecMixVol}

For the rest of the paper, we let $\kk=\mathbb{C}$. Given a map $f:\p{n}_\kk\dashrightarrow \p{n}_\kk$ of base ideal $\I_f$, the inclusion $\Gamma_f\subset \p{}(\I_f)$ of the graph $\Gamma_f$ of $f$ in the $\Proj$ of the symmetric algebra of $\I_f$, see below \Cref{dfnDetMap} for more details, provides the following estimations:
\begin{equation}\label{ineqGP}
\forall k\in\lbrace 0,\ldots,n\rbrace,\hspace{2em} \d{f}{k}\leqslant\e{\p{}(\I_f)}{n-k,k}.
\end{equation}
Let us now present a specificity when computing the projective degrees of the Koszul-determinantal map $f$ we are now going to define. Recall \Cref{dfnDetMap} that, assuming that $f$ is determinantal, the presentation matrix $\Phi_f$ of $\I_f$ is called the Hilbert-Burch matrix of $f$. In this latter case, letting
\begin{equation*}
\begin{tikzcd}[row sep=0.8em,column sep=1em,minimum width=2em]
0\ar[r]& \underset{k=1}{\overset{n}{\oplus}}\msf{R}(-d_k)\ar[r, "{\Phi_f}"]& \msf{R}^{n+1} \ar[rrr,"{(f_0 \; \ldots \; f_n)}"] & & & \I_f(d) \ar{r}& 0
\end{tikzcd}
\end{equation*} be a graded free resolution of the base ideal of $f$, we call the positive integer $d_1,\ldots,d_n\geqslant 1$ the \emph{Hilbert-Burch degrees of $f$}.

Remark from the equations $(\msf{y}_0\;\ldots \; \msf{y}_n)\Phi_f$ defining the embedding of $\p{}(\I_f)$ in $\p{n}_\kk\times\p{n}_\kk$ that the conditions:
\begin{equation}\label{condMinors}
\forall k\in \lbrace 1,\ldots, n-1\rbrace,\;\codim \V\big(\I_k(\Phi_f)\big)\geq n+1-k
\end{equation} are the required conditions in order that $\p{}(\I_f)$ has codimension $n$ in which case $\p{}(\I_f)$ is a complete intersection and a minimal free resolution of its coordinate ring is the Koszul complex on the entries of $(\msf{y}_0\;\ldots \; \msf{y}_n)\Phi_f$.

\begin{dfn}[Koszul-determinantal map]\label{dfnKoszDetMap} 
 A \emph{Koszul-determinantal map} $f=(f_0:\ldots:f_n):\p{n}_\kk\dasharrow \p{n}_\kk$ is a map such that the following two conditions are verified:
\begin{itemize}
\item  a minimal presentation of $\I_f=(f_0,\ldots,f_n)$ reads: 
\begin{equation*}
\begin{tikzcd}[row sep=0.8em,column sep=1em,minimum width=2em]
0\ar[r]& \msf{R}^n\ar[r, "{\Phi_f}"]& \msf{R}^{n+1} \ar[rrr,"{(f_0 \; \ldots \; f_n)}"] & & & \I_f \ar{r}& 0.
\end{tikzcd}
\end{equation*}
\item the conditions \eqref{condMinors} are satisfied.
\end{itemize}
\end{dfn}

\begin{prp}\label{prpEstimMDet}
Let $f=(f_0:\ldots:f_n):\p{n}_\kk\dasharrow\p{n}_\kk$ be a Koszul-determinantal map of Hilbert-Burch degree $d_1,\ldots,d_n\geqslant 1$, then for all $k\in\lbrace 0,\ldots,n\rbrace$,
\begin{equation}\label{ineqGPdet}
\d{f}{k}\leqslant \sigma_{n-k,n}(d_1,\ldots,d_n)=\underset{\lbrace i_1,\ldots,i_{k}\rbrace \subset \lbrace 1,\ldots,n\rbrace}{\sum} d_{i_1}\ldots d_{i_k}
\end{equation} where $\sigma_{k,n}(u_1,\ldots,u_n)$ is the $k$-th symmetric polynomial in $n$ variables.
\end{prp}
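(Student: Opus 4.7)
The plan is to combine the estimation \eqref{ineqGP} with an explicit computation of the multidegrees of $\p{}(\I_f)$, using that the Koszul-determinantal hypothesis makes $\p{}(\I_f)$ a complete intersection in $\p{n}_\kk\times\p{n}_\kk$ with very explicit bigraded Betti numbers.

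First, from \eqref{ineqGP} it suffices to show that
\[
\e{\p{}(\I_f)}{n-k,k}=\sigma_{n-k,n}(d_1,\ldots,d_n)
\]
for every $k\in\lbrace 0,\ldots,n\rbrace$. By \Cref{dfnKoszDetMap}, the $n$ entries of the line matrix $(\msf{y}_0\;\ldots\;\msf{y}_n)\Phi_f$ form a regular sequence in $\msf{S}=\msf{R}[\msf{y}_0,\ldots,\msf{y}_n]$, and the $i$-th such entry has bidegree $(d_i,1)$. Hence the minimal free resolution of $\msf{S}/\I_{\p{}(\I_f)}$ is the Koszul complex on these entries, and the numerator of the bigraded Hilbert series reads
\[
\Num_{\msf{S}/\I_{\p{}(\I_f)}}(T_0,T_1)=\prod_{i=1}^{n}\bigl(1-T_0^{d_i}T_1\bigr).
\]

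Next, since $\p{}(\I_f)$ has codimension $n$, the multidegrees are the coefficients of the homogeneous component of total degree $n$ of $\Num_{\msf{S}/\I_{\p{}(\I_f)}}(1-T_0,1-T_1)$. For each factor, one computes
\[
1-(1-T_0)^{d_i}(1-T_1)=d_iT_0+T_1+\text{higher order terms},
\]
as the constant term vanishes and the partial derivatives at the origin equal $d_i$ and $1$ respectively. Consequently, the homogeneous component of degree $n$ of the product is exactly the product of the linear parts:
\[
\bigl(\Num_{\msf{S}/\I_{\p{}(\I_f)}}(1-T_0,1-T_1)\bigr)_{n}=\prod_{i=1}^{n}(d_iT_0+T_1).
\]

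Finally, expanding this product and collecting the coefficient of $T_0^{n-k}T_1^{k}$ amounts to choosing which $n-k$ of the $n$ factors contribute their $d_iT_0$ summand, giving precisely the elementary symmetric polynomial $\sigma_{n-k,n}(d_1,\ldots,d_n)$. Combining with \eqref{ineqGP} yields \eqref{ineqGPdet}. No step is truly delicate here; the only point to watch is that extracting the lowest-degree part of each factor is legitimate for computing the component of total degree $n$ of the product, which holds because each $G_i(T_0,T_1):=1-(1-T_0)^{d_i}(1-T_1)$ has no constant term.
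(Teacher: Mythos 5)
Your proof is correct and follows essentially the same route as the paper: both use the inequality \eqref{ineqGP} together with the fact that the Koszul-determinantal hypothesis makes $\p{}(\I_f)$ a complete intersection resolved by the Koszul complex on the entries of $(\msf{y}_0\;\ldots\;\msf{y}_n)\Phi_f$, and then read off the degree-$n$ component of $\Num_{\msf{S}/\J_1}(1-T_0,1-T_1)$. You simply spell out the Hilbert-series computation (the product $\prod_{i}(1-T_0^{d_i}T_1)$ and the extraction of the linear parts $d_iT_0+T_1$) that the paper leaves implicit.
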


\begin{proof}
Letting $\Phi_f$ be the $(n+1)\times n$ Hilbert-Burch matrix of $f$, the ideal $\J_1$ of $\p{}(\I_f)$ in $\p{n}_\kk\times\p{n}_\kk$ is generated by the $n$ entries $\phi_1,\ldots,\phi_n$ of the line matrix $(\msf{y}_0\;\ldots\;\msf{y}_n)\Phi_f$. Since conditions \eqref{condMinors} are moreover satisfied, $\p{}(\I_f)$ has codimension $n$ in $\p{n}_\kk\times\p{n}_\kk$ so $\p{}(\I_f)$ is a complete intersection and the Koszul complex on $\phi_1,\ldots,\phi_n\in\msf{S}$
\begin{equation*}
\begin{tikzcd}[row sep=0.8em,column sep=1em,minimum width=2em]
0\ar[r]& \msf{S}(-\underset{k=1}{\overset{n}{\sum}}d_k,-n)\ar[r]& \ldots \ar[r]& \overset{2}{\wedge}\big(\underset{k=1}{\overset{n}{\oplus}}\msf{S}(-d_k,-1)\big) \ar[r]& \underset{k=1}{\overset{n}{\oplus}}\msf{S}(-d_k,-1) \ar[r]& \msf{S}
\end{tikzcd}
\end{equation*} 
provides a minimal bi-graded free resolution of the coordinate ring of $\p{}(\I_f)$ from which we extract the component of total degree $n$ of $\Num_{\msf{S}/\J_1}(1-T_0,1-T_1)$ (see \cite[III.3.6]{eisenbud2000geo}):
\begin{equation*}
\forall k\in\lbrace 0,\ldots,n\rbrace,\hspace{0.5em}\e{\p{}(\I_f)}{n-k,k}=s_{n-k,n}(d_1,\ldots,d_n)=\underset{\lbrace i_1,\ldots,i_{k}\rbrace \subset \lbrace 1,\ldots,n\rbrace}{\sum} d_{i_1}\ldots d_{i_k}.
\end{equation*}
The conclusion of the proposition follows then from \eqref{ineqGP}.
\end{proof}

We point out that the conclusion of \Cref{prpEstimMDet} is classical and had been established in more general context of maps $f:\p{n}_\kk\dasharrow\p{n'}_\kk$ with $n\leqslant n'$, see for instance \cite[Theorem 5.7]{CidRuiz2021MixedMultProjDeg}. It explains however why, when interested in determinantal Cremona maps, one has also to consider base ideal not of linear type or else, if the base ideal is of linear type, the entries of the presentation matrix can only contain linear polynomials (case $d_1=\ldots=d_n=1$ of the previous proposition). In other words, one has to consider the kernel of the surjection $\Sym(\I_f)\twoheadrightarrow\R(\I_f)$ in order to understand the level of approximations of the inequalities \eqref{ineqGPdet}. As we previously explained in \Cref{SecKunneth}, this kernel is described by the ideals $\I_k(\Phi_f)$ for $k\in\lbrace 1,\ldots,n-1\rbrace$ of $k$-minors ideals of the presentation matrix $\Phi_f$ of $\I_f$ which, in our context of determinantal base ideal $\I_f$, have an expected codimension:
\begin{equation*}
\forall k\in \lbrace 1,\ldots, n\rbrace,\;\codim \V\big(\I_k(\Phi_f)\big)\geq n+2-k,
\end{equation*} ensuring, if these latter conditions are verified, that $\I_f$ is of linear type \cite[Subsection 2.1]{RussoSimis2001OnBirMap}. Hence the conditions \eqref{condMinors} are the first step to consider when interested in determinantal Cremona maps.

\subsection{Bernstein theorem's bound on the number of solutions of a zero dimensional polynomial system}\label{recallMV}
When interested in the topological degree $\d{f}{0}$ of a determinantal map $f=(f_0:\ldots:f_n):\p{n}_\kk\dasharrow\p{n}_\kk$, remark that two polynomial systems can be considered. An initial one, in the end the main object of this work, is defined by $f$ itself: given $\mbf{y}=(\msf{y}_0:\ldots:\msf{y}_n)\in\p{n}_\kk$ in the target space of $f$, one wants to find the pre-images $\mbf{x}\in\p{n}_\kk$ of $\mbf{y}$, i.e.\ wants to solve the polynomial system
\begin{equation}
f(\mbf{x})=\mbf{y}\Leftrightarrow\begin{cases}\msf{y}_0&=f_0(\msf{x}_0,\ldots:\msf{x}_n)\\
&\vdots \\
\msf{y}_n&=f_n(\msf{x}_0,\ldots:\msf{x}_n)
\end{cases}
\end{equation}
Expressing $\mbf{y}$ as the intersection of $n$ hyperplans in the target space of $f$ and assuming that $\mbf{y}$ is general enough, B\'ezout theorem then asserts that $\d{f}{0}=\#f^{-1}(\lbrace \mbf{y}\rbrace)\leqslant d^n$ where $d=\d{f}{n-1}$ is the common degree of the polynomials $f_i$ generating the base ideal $\I_f=(f_0,\ldots,f_n)$ of $f$. An intermediate polynomial system is moreover defined by a presentation matrix $\Phi_f$ of $\I_f$. Indeed, by definition of $\Phi_f$, one has $(f_0\;\ldots\;f_n)\Phi_f=0$ so the polynomial system
\begin{equation}\label{polySystPres}
(\msf{y}_0\;\ldots\; \msf{y}_n)\Phi_f=0
\end{equation}
whose number of equations is the number of columns of $\Phi_f$, contains by definition $f^{-1}(\lbrace \mbf{y}\rbrace)$. Now consider the situation where $\Phi_f=(\phi_{ij})_{\genfrac{}{}{0pt}{}{0\leqslant i \leqslant n}{ 1\leqslant j \leqslant n}}$ is a $(n+1)\times n$ matrix where each entry $\phi_{ij}$ of the $j$-th column of $\Phi_f$ has degree $d_j\geqslant 1$ and in which case $d=\d{f}{n-1}=d_1+\ldots+d_n$ by Hilbert-Burch theorem. Then, provided that $\codim \V\big(\I_k(\Phi_f)\big)\geq n+1-k$ for all $k\in \lbrace 1,\ldots, n\rbrace$, the system \eqref{polySystPres} is $0$-dimensional in $\p{n}_\kk$ and, by B\'ezout theorem, has $d_1\cdot \ldots\cdot d_n$ solutions. Moreover, in case the entries of $\Phi_f$ are sparse polynomials that all verify the same algebraic constraints on their coefficients, one can refine the bound on the number of solutions of \eqref{polySystPres} by using Bernstein's theorem on sparse polynomials. It then gives a combinatorial translation, via the computation of \emph{mixed volumes} associated to the polynomial entries of $\Phi_f$, to the problem of detecting determinantal Cremona maps of given Hilbert-Burch degree $(d_1,\ldots,d_n)$ among all determinantal maps of Hilbert--Burch degree $(d_1,\ldots,d_n)$.

This idea structures our approach of \Cref{prpMMGluedMap}, see \cite{Trung2005MixedMO} or \cite{LazarsfeldMustata2009ConvexBodies} for developments in more general contexts and for pointers to the related literature. We refer to \cite[Chapter 7]{coXLittleOShea2005UsingAlgGeo} for the background in convex geometry and all the material we use for the actual computations. Following \cite[7.4]{coXLittleOShea2005UsingAlgGeo}, a set $C\subset\mathbb{R}^n$ is \emph{convex} if it contains any segments between two points in $C$ and the \emph{convex hull} $\Conv(S)$ of a subset $S\subset \mathbb{R}^n$ is the smallest convex set containing $S$. A \emph{polytope} is the convex hull $\Conv(A)$ of a finite set $A\subset\mathbb{R}^n$ and the polytopes which are the convex hull of points with integer coordinates are called \emph{lattice polytopes}.

Given $\phi=\underset{\alpha\in\mathbb{N}^n}{\sum} c_{\alpha}\msf{x}^{\alpha}\in\kk[\msf{x}_1,\ldots,\msf{x}_n]$ where $\msf{x}^{\alpha}=\msf{x}_1^{\alpha_1}\ldots \msf{x}_n^{\alpha_n}$, the \emph{Newton polytope} of $\phi$, denoted $\NP(\phi)$, is the lattice polytope $\NP(\phi)=\Conv\lbrace \alpha\in\mathbb{N}^n,\;c_{\alpha}\neq 0\rbrace$.

A polytope $P\subset\mathbb{R}^n$ has an $n$-dimensional volume $\Vol_n(P)$. Given two polytopes $P,Q\subset\mathbb{R}^n$ and a real number $\lambda\geqslant 0$, the \emph{Minkowski sum} of $P$ and $Q$, denoted $P+Q$ is the set \[P+Q:=\lbrace p+q,\;p\in P,q\in Q\rbrace\] where $p+q$ denotes the usual vector sum in $\mathbb{R}^n$ and the $\lambda P$ stands for the polytope $\lbrace \lambda p,\;p\in P\rbrace$ where $\lambda p$ is the usual scalar multiplication in $\mathbb{R}^n$.

Given any collection $P_1,\ldots,P_r\subset\mathbb{R}^n$ and $r$ non negative scalar $\lambda_1,\ldots,\lambda_r\in\mathbb{R}$, then $\Vol_n(\lambda_1P_1+\ldots\lambda_rP_r)$ is a homogeneous polynomial of degree $n$ in the $\lambda_i$ \cite[7. Prop.4.9]{coXLittleOShea2005UsingAlgGeo}.
\begin{dfn}[mixed volume of a collection of polytopes]
The $n$-dimensional \emph{mixed volume} $\MV_n(P_1,\ldots,P_n)$ of given polytopes $P_1,\ldots,P_n$ is the coefficient of the monomial $\lambda_1,\ldots,\lambda_n$ in $\Vol_n(P_1,\ldots,P_n)$.
\end{dfn}
\begin{thm}\label{MVprop}\cite[7. Th.4.12]{coXLittleOShea2005UsingAlgGeo} \begin{enumerate}[label=(\roman*)]\item\label{MVtranslation} The mixed volume $\MV_n(P_1,\ldots,P_n)$ is invariant if the $P_i$ are replaced by their images under a volume-preserving transformation of $\mathbb{R}^n$.
\item\label{MVsymmetry} $\MV_n(P_1,\ldots,P_n)$ is symmetric and linear in each variable (multilinearity of the mixed volume).
\item\label{MVvanish} $\MV_n(P_1,\ldots,P_n)\geqslant 0$ and 
$\MV_n(P_1,\ldots,P_n)=0$ if one of the $P_i$ has dimension zero (i.e. if $P_i$ consists of a single point).

 $\MV_n(P_1,\ldots,P_n)>0$ if every $P_i$ has dimension $n$.
\item\label{MVcribleFormula}$\MV_n(P_1,\ldots,P_n)=\underset{k=1}{\overset{n}{\sum}}(-1)^{n-k}\underset{\genfrac{}{}{0pt}{}{I\subset\lbrace 1,\ldots,n\rbrace}{ |I|=k}}{\sum}\Vol_n(\underset{i\in I}{\sum}P_i)$ where $\underset{i\in I}{\sum}P_i$ is the Min\-kowski sum of polytope.
\end{enumerate}
\end{thm}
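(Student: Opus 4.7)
The strategy throughout is to exploit the polynomial identity recalled from \cite[7. Prop.4.9]{coXLittleOShea2005UsingAlgGeo} just before the theorem: $\Vol_n(\lambda_1 P_1 + \cdots + \lambda_r P_r)$ is a homogeneous polynomial of degree $n$ in the non-negative real variables $(\lambda_1, \ldots, \lambda_r)$. By definition $\MV_n(P_1, \ldots, P_n)$ is one specific coefficient of this polynomial, so all four statements reduce to manipulations of that coefficient.

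For \ref{MVtranslation}, any volume-preserving affine map $T$ of $\mathbb{R}^n$ commutes with scalar multiplication and Minkowski sums up to a translation: $T(\sum_i \lambda_i P_i) = \sum_i \lambda_i T(P_i) + c$. Since $\Vol_n$ is invariant both under $T$ and under translation, the polynomials $\Vol_n(\sum_i \lambda_i P_i)$ and $\Vol_n(\sum_i \lambda_i T(P_i))$ agree, hence so do their coefficients of $\lambda_1 \cdots \lambda_n$. For \ref{MVsymmetry}, symmetry is automatic because the monomial $\lambda_1 \cdots \lambda_n$ is symmetric in its indices, and multilinearity in the first slot follows by applying the polynomial identity to the enlarged family $(P_1, P_1', P_2, \ldots, P_n)$ with parameters $(a\lambda_1, b\lambda_1, \lambda_2, \ldots, \lambda_n)$ and reading off the coefficient of $\lambda_1 \cdots \lambda_n$. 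For \ref{MVvanish}, the vanishing part is immediate: if $P_i$ is a single point then $\lambda_i P_i$ is also a single point, so $\Vol_n(\sum_j \lambda_j P_j)$ does not depend on $\lambda_i$ and its coefficient of $\lambda_1 \cdots \lambda_n$ must vanish.

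Non-negativity of $\MV_n$ in general, and strict positivity whenever every $P_i$ is $n$-dimensional, is the hardest point and requires genuine convex-geometric input beyond the polynomial formalism: I would either invoke the mixed-cell decomposition of the Cayley polytope associated with $(P_1, \ldots, P_n)$, which expresses $\MV_n(P_1, \ldots, P_n)$ as a sum of positive-volume mixed cells, or induct on $n$ using a suitable case of the Alexandrov--Fenchel inequalities. For \ref{MVcribleFormula}, I would use the standard finite-difference identity
\[
\prod_{i=1}^n \Delta_i f(0) \;=\; \sum_{I \subseteq \{1,\ldots,n\}} (-1)^{n - |I|} f(\mathbf{1}_I),
\]
valid for any polynomial $f$ in $n$ variables, where $\Delta_i f(\lambda) = f(\lambda + e_i) - f(\lambda)$ and $\mathbf{1}_I$ is the characteristic vector of $I$. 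A direct check shows that if $f = \sum_\alpha c_\alpha \lambda^\alpha$ has total degree at most $n$ then $\prod_i \Delta_i f(0) = c_{(1, \ldots, 1)}$, because any monomial $\lambda^\alpha$ with some $\alpha_j = 0$ is killed by $\Delta_j$, while the only multi-index with all $\alpha_i \geqslant 1$ and $\sum \alpha_i \leqslant n$ is $(1, \ldots, 1)$. Applying this to $f(\lambda) = \Vol_n(\sum_i \lambda_i P_i)$ and noting $f(\mathbf{1}_I) = \Vol_n(\sum_{i \in I} P_i)$ yields the announced sieve formula.
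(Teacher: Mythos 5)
This is a cited classical result (\cite[7.~Th.~4.12]{coXLittleOShea2005UsingAlgGeo}); the paper gives no proof of its own, so there is nothing internal to compare you against. Your derivation is essentially the standard one, and it is the natural route given how the paper sets things up: everything is read off from the fact, recalled just before the statement, that $\Vol_n(\lambda_1P_1+\ldots+\lambda_rP_r)$ is a homogeneous polynomial of degree $n$ in the $\lambda_i$. Your arguments for \ref{MVtranslation}, for symmetry and multilinearity in \ref{MVsymmetry} (the trick of introducing separate parameters for $P_1$ and $P_1'$ and extracting the coefficient of $\lambda_1\cdots\lambda_n$ after the substitution is correct, since the cross term $\mu_1\mu_1'\lambda_3\cdots\lambda_n$ does not contribute to that monomial), and for the vanishing statement in \ref{MVvanish} are all complete. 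The finite-difference proof of \ref{MVcribleFormula} is also correct; the only cosmetic point is that your sum runs over all $I\subseteq\{1,\ldots,n\}$ including $I=\emptyset$, whereas the stated formula starts at $k=1$ --- these agree because $f(\mathbf{0})=\Vol_n(\{0\})=0$, which is worth one line. The one genuine gap is the non-negativity and strict positivity in \ref{MVvanish}: you correctly identify that this cannot follow from the polynomial formalism alone and name adequate tools (mixed subdivisions of the Cayley polytope, or monotonicity/Alexandrov--Fenchel), but you do not carry out either argument, so as written this part is a sketch rather than a proof. Since positivity is not actually used anywhere in the paper (only the multilinearity, the projection formula of \Cref{lmm6}, and the vanishing when some $P_i$ is a point are invoked, e.g.\ in \Cref{lmmDegTopMV} and \Cref{lmmDecompoMV}), this omission is harmless for the paper's purposes, but it should be flagged as incomplete if the theorem were to be proved in full.
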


In addition to \Cref{MVprop}, our work relies also on the following toolbox.
\begin{lmm}\label{lmm6} \cite[Lemma 6]{SteffensTheobald2010MixedVolumesTechniques}
Let $n,n'\in\mathbb{N}^*$ and let $P_1,\ldots,P_n$ be polytopes in $\mathbb{R}^{n+n'}$ and $P_{n+1},\ldots,P_{n+n'}$ be polytopes in $\mathbb{R}^n \times\lbrace 0_{\mathbb{R}^{n'}}\rbrace\subset\mathbb{R}^{n+n'}$. Then:
\begin{equation*}
\MV_{n+n'}(P_1,\ldots,P_{n+n'})=\MV_n(P_1,\ldots,P_n)\MV_{n'}(\pi_n(P_{n+1}),\ldots,\pi_{n'}(P_{n+n'}))
\end{equation*}
where $\pi_{n'}:\mathbb{R}^{n+n'}\rightarrow \mathbb{R}^{n'}$ stands for the projection on the last $n'$ coordinates.
\end{lmm}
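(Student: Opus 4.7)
The plan is to reduce the identity to a diagonal case by polarization of mixed volumes, and then to compute the resulting volume via slicing along the projection $\pi_{n'}$. Up to relabeling the arguments (using the symmetry of the mixed volume in \Cref{MVprop}), denote by $P_1,\ldots,P_n$ the polytopes lying in the ``flat'' subspace $\mathbb{R}^n\times\{0_{\mathbb{R}^{n'}}\}$ and by $P_{n+1},\ldots,P_{n+n'}$ the general polytopes in $\mathbb{R}^{n+n'}$. Both sides of the asserted formula are symmetric and multilinear (under Minkowski sum and positive scaling) separately in each of the two groups of arguments: the left-hand side by \Cref{MVprop}, the right-hand side because $\pi_{n'}$ is linear and each factor of the right-hand side is itself symmetric multilinear. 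By the standard polarization principle (any symmetric Minkowski-multilinear functional on convex bodies is determined by its diagonal values through $\MV_n(K,\ldots,K)=n!\,\Vol_n(K)$), it therefore suffices to verify the identity in the diagonal case $P_1=\cdots=P_n=P$ and $P_{n+1}=\cdots=P_{n+n'}=Q$.

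In that diagonal case, set $F(\lambda,\mu):=\Vol_{n+n'}(\lambda P+\mu Q)$. The polarization expansion of the volume of a Minkowski sum into mixed volumes identifies the coefficient of $\lambda^n\mu^{n'}$ in $F(\lambda,\mu)$ with $\tfrac{1}{n!\,n'!}\MV_{n+n'}(\underbrace{P,\ldots,P}_{n},\underbrace{Q,\ldots,Q}_{n'})$. On the other hand, since $\lambda P\subset\ker\pi_{n'}$, one has $\pi_{n'}(\lambda P+\mu Q)=\mu\pi_{n'}(Q)$, and Cavalieri's principle yields
\[F(\lambda,\mu)=\int_{\mu\pi_{n'}(Q)}\Vol_n\bigl(\lambda P+S_y\bigr)\,dy,\]
where $S_y\subset\mathbb{R}^n$ is the fibre (in the first $n$ coordinates) of $\mu Q$ over $y$. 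The integrand is a polynomial in $\lambda$ of degree at most $n$, and the Steiner-type expansion $\Vol_n(\lambda P+K)=\lambda^n\Vol_n(P)+O(\lambda^{n-1})$ shows that its leading $\lambda^n$-coefficient is $\Vol_n(P)$, independent of the slice $S_y$. Integrating, the coefficient of $\lambda^n$ in $F(\lambda,\mu)$ equals $\Vol_n(P)\cdot\Vol_{n'}(\mu\pi_{n'}(Q))=\mu^{n'}\Vol_n(P)\Vol_{n'}(\pi_{n'}(Q))$, so the coefficient of $\lambda^n\mu^{n'}$ in $F$ is exactly $\Vol_n(P)\Vol_{n'}(\pi_{n'}(Q))$.

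Matching the two expressions for this coefficient gives $\MV_{n+n'}(P,\ldots,P,Q,\ldots,Q)=n!\,n'!\,\Vol_n(P)\Vol_{n'}(\pi_{n'}(Q))=\MV_n(P,\ldots,P)\cdot\MV_{n'}(\pi_{n'}(Q),\ldots,\pi_{n'}(Q))$, which is the diagonal version of the claimed identity; polarization then delivers the general formula. The main obstacle in the approach is the slicing step: one has to justify that only the top-degree-in-$\lambda$ part of $\Vol_n(\lambda P+S_y)$ contributes to the coefficient of $\lambda^n$ in $F(\lambda,\mu)$, which amounts to verifying that the Steiner expansion of $\Vol_n(\lambda P+\,\cdot\,)$ behaves uniformly in the parameter $y\in\mu\pi_{n'}(Q)$, a standard fact about polynomial expansions of volume for Minkowski sums with a fixed polytope.
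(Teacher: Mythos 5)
Your argument is correct, but note that the paper offers no proof to compare it with: \Cref{lmm6} is imported verbatim from [ST10, Lemma 6], so what you have written is a self-contained substitute for an external citation rather than an alternative to an internal argument. Two comments on your route. First, you were right to relabel the arguments: as printed, the statement puts the flat polytopes $P_{n+1},\ldots,P_{n+n'}$ inside $\mathbb{R}^n\times\{0_{\mathbb{R}^{n'}}\}$ and then applies $\pi_{n'}$ to them, which would collapse them to a point and force the right-hand side to vanish by \Cref{MVvanish} of \Cref{MVprop}; the intended reading is exactly the one you adopt, with the flat factors contributing $\MV_n$ of themselves (viewed in $\mathbb{R}^n$) and the general factors contributing $\MV_{n'}$ of their projections. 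Second, both halves of your argument are sound under the normalization $\MV_N(K,\ldots,K)=N!\Vol_N(K)$ implicit in the paper's definition: the bi-polarization reduction is legitimate because each side is symmetric and Minkowski-multilinear in each group separately (the right-hand side because $\pi_{n'}$ commutes with Minkowski sums and nonnegative dilations), and the inclusion--exclusion identity \Cref{MVcribleFormula} of \Cref{MVprop} shows such a functional is determined by its diagonal values group by group; and in the bi-diagonal case the Fubini computation correctly extracts the coefficient of $\lambda^n\mu^{n'}$, since the $\lambda^n$-coefficient of $\Vol_n(\lambda P+S_y)$ equals $\Vol_n(P)$ for every slice $S_y$, so that integrating over $y\in\mu\,\pi_{n'}(Q)$ produces the factor $\mu^{n'}\Vol_{n'}(\pi_{n'}(Q))$. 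The only step you should not leave as a remark is the one you flag yourself: to extract coefficients under the integral sign you need $y\mapsto\Vol_n(\lambda P+S_y)$ to be a polynomial in $\lambda$ with coefficients measurable and integrable in $y$, which follows from continuity of mixed volumes in the Hausdorff metric together with the fact that the slices $S_y$ of the polytope $\mu Q$ vary continuously on the relative interior of its projection (and the boundary has measure zero). With that observation made explicit, your proof is complete and is essentially the standard proof of the projection formula.
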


Following \cite[7 Section 5]{coXLittleOShea2005UsingAlgGeo}, let us now define the genericity of a polynomial with respect to a polytope.
\begin{dfn}[genericity with respect to a polytope]\label{dfnGenPoly}
Given finite set $A\subset\mathbb{Z}^n$, put $L(A):=\lbrace \underset{\alpha\in A_i}{\sum}c_{\alpha}\msf{x}^{\alpha}\in\kk[\msf{x}_1,\ldots,\msf{x}_n]\rbrace$ and remark that each $L(A)$ can be considered as an affine space $\kk^{\# A_i}$ with the coordinate $c_{\alpha}$ as coordinates.

A polynomial $\underset{\alpha\in A_i}{\sum}c_{\alpha}\msf{x}^{\alpha}$ is said to be \emph{generic with respect to $L(A)$} if its coefficients are generic in $L(A)$.

Given $k\geqslant 1$ and finite sets $A_1,\ldots,A_k\in\mathbb{Z}^n$, a property is said to \emph{hold generically} for polynomials $(\phi_1,\ldots,\phi_n)\in L(A_1)\times\ldots\times L(A_k)$ is there is a non zero polynomial in the coefficients of the $\phi_i$ such that the property holds for all $\phi_1,\ldots,\phi_n$ for which the polynomial is non vanishing, in particular if every $\phi_1,\ldots,\phi_n$ is generic with respect to its own polytope $L(A_1),\ldots, L(A_k)$.
\end{dfn}

\begin{thm}[Bernstein theorem]\label{thmBernstein} Given polynomials $\phi_1,\ldots,\phi_n\in\kk[\msf{x}_1,\ldots,\msf{x}_n]$ with finitely many common zeroes in $(\kk^*)^n$, let $P_i=\NP(\phi_i)$. Then the number of common zeroes in $(\mathbb{C}^*)^n$ is bounded above by the mixed volume $\MV_n(P_1,\ldots,P_n)$. Moreover if each $\phi_i$ is generic with respect to $P_i$, the number of common zero solutions is exactly $\MV_n(P_1,\ldots,P_n)$.
\end{thm}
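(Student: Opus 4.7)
The plan is to split the statement into the generic equality and the upper bound, proving the former by toric intersection theory and deducing the latter by a flat-family deformation argument.

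For the generic equality, I would choose a smooth complete toric variety $X_\Sigma$ whose fan simultaneously refines the inner-normal fan of each $P_i$. Via the standard dictionary between lattice polytopes and torus-invariant divisors, each $P_i$ corresponds to a base-point-free divisor class $[D_i]$ on $X_\Sigma$, and every $\phi_i\in L(A_i)$ extends to a global section of $\mathcal{O}_{X_\Sigma}(D_i)$. The key toric intersection identity
\[
D_1 \cdot D_2 \cdots D_n = \MV_n(P_1,\ldots,P_n)
\]
is obtained by multilinearly expanding the self-intersection polynomial
\[
(\lambda_1 D_1+\cdots+\lambda_n D_n)^n = n!\,\Vol_n(\lambda_1 P_1+\cdots+\lambda_n P_n),
\]
whose right-hand side stems from the asymptotic Riemann--Roch formula $\chi(\mathcal{O}_{X_\Sigma}(kD))\sim k^n\Vol_n(P)/n!$ for an ample toric divisor $D$ associated with a polytope $P$. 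For generic $\phi_i$, a Bertini argument on $X_\Sigma$ shows that the divisors $V(\phi_i)$ meet transversally and that their intersection avoids the boundary $X_\Sigma\setminus(\kk^*)^n$, so the number of torus solutions equals the intersection number $D_1\cdots D_n=\MV_n(P_1,\ldots,P_n)$.

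For the upper bound, given any $(\phi_1,\ldots,\phi_n)$ with a finite torus zero locus, I would deform to the generic case by setting $\phi_i(t)=(1-t)\phi_i+t\widetilde\phi_i$ with each $\widetilde\phi_i$ generic in $L(A_i)$. The family of intersections inside the proper variety $X_\Sigma$ is zero-dimensional for all but finitely many $t$, so every torus solution at $t=0$ arises as a limit of torus solutions at nearby $t\neq 0$, while solutions at $t\neq 0$ may in the limit escape only to the toric boundary $X_\Sigma\setminus(\kk^*)^n$. Combined with flatness to preserve multiplicities, this yields the upper semicontinuity estimate bounding the number of torus solutions at $t=0$ by the generic count $\MV_n(P_1,\ldots,P_n)$ established in the first step.

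The main obstacle is the toric intersection identity itself, whose rigorous derivation requires the correspondence between Minkowski sums of lattice polytopes and tensor products of toric line bundles, together with the Ehrhart-type asymptotics for ample toric divisors. A secondary subtlety lies in the deformation step: one must verify that, as $t\to 0$, solutions can only be lost to the toric boundary and no unexpected multiplicities appear, which is handled by the flatness of the universal family of intersection schemes over an open subset of the deformation parameter.
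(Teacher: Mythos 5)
The paper does not prove this statement: it is quoted as a classical result (Bernstein's theorem, or the BKK bound) with a pointer to \cite[Proof of 7. Th. 5.4]{coXLittleOShea2005UsingAlgGeo}, so there is no in-paper argument to compare against. Your outline is the standard toric-geometry proof and is sound in its architecture; note that it is genuinely different from the proof in the cited reference, which proceeds by homotopy continuation and mixed subdivisions of the Minkowski sum rather than by intersection theory on a toric compactification. The toric route buys a clean conceptual identification $D_1\cdots D_n=\MV_n(P_1,\ldots,P_n)$ (your normalization matches the paper's convention, where $\MV_n$ is the coefficient of $\lambda_1\cdots\lambda_n$ in $\Vol_n(\sum\lambda_iP_i)$, so that $\MV_n(P,\ldots,P)=n!\Vol_n(P)$), at the cost of importing the polytope--divisor dictionary and a resolution to a smooth common refinement of the normal fans. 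Two points deserve more care than your sketch gives them. First, the theorem's genericity is with respect to $L(A_i)$, i.e.\ the linear subsystem spanned by the monomials actually supported on $A_i$, not the full system $H^0(\mathcal{O}_{X_\Sigma}(D_i))$ spanned by all lattice points of $P_i$; your Bertini argument still goes through because this subsystem contains the vertex monomials of $P_i$ and is therefore already base-point-free on $X_\Sigma$, but that needs to be said. Second, in the deformation step the correct mechanism is conservation of number applied locally at each isolated torus zero of the special fibre: each such zero has local intersection multiplicity at least one and absorbs at least one nearby solution of the generic fibre, all of which lie in the torus for small $t$; this gives the upper bound without needing flatness of the whole family or any claim about where solutions ``escape,'' and it sidesteps the genuine difficulty that the special fibre may have positive-dimensional components in the toric boundary.
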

See \cite[Proof of 7. Th. 5.4]{coXLittleOShea2005UsingAlgGeo} and the references therein for highlights about Bernstein theorem.

\subsection{Projective degrees of Koszul-determinantal maps defined by sparse polynomials}
Given a homogeneous polynomial $\phi\in\msf{R}=\kk[\msf{x}_0,\ldots,\msf{x}_n]$, the Newton polytope $\NP(\phi)\subset \mathbb{R}^n$ associated to $\phi$ is the Newton polytope of the de-homogenization of $\phi$ with respect to a variable $\msf{x}_i$ (omitted when irrelevant) and, reciprocally, given a polytope $P\subset\mathbb{R}^n$, we denote $\phi\in P$ for a homogeneous polynomial $\phi\in\msf{R}$ whose de-homogenization with respect to the  variable $\msf{x}_i$ is in $P$. Furthermore, we consider more general Newton polytopes $P\subset\mathbb{R}^n\times\mathbb{R}^n$ corresponding to bi-homogeneous polynomials $\phi\in\msf{S}=\msf{R}[\msf{y}_0,\ldots,\msf{y}_n]$, that is the bi-de-homogenization of $\phi$ with respect to one fixed variable $\msf{x}_i$ and one fixed variable $\msf{y}_j$ ($i,j\in\lbrace 0,\ldots,n\rbrace$) is in $P$.

Let us also denote by $S_n=\Conv\lbrace (\underbrace{0,\ldots,0}_{n}),(1,\underbrace{0,\ldots,0}_{n-1}),\ldots,(\underbrace{0,\ldots,0}_{n-1},1)\rbrace\subset\mathbb{R}^n$ the unit simplex of $\mathbb{R}^n$  and let $S_n^{\mbf{x}}:=S_n\times \lbrace 0_{\mathbb{R}^n}\rbrace\subset\mathbb{R}^n\times\mathbb{R}^n$ and $S_n^{\mbf{y}}:=\lbrace 0_{\mathbb{R}^n}\rbrace\times S_n\subset\mathbb{R}^n\times\mathbb{R}^n$. %Lastly, write $\pi_n:\mathbb{R}^n\times\mathbb{R}^n\rightarrow \mathbb{R}^n$ (resp. $\pi'_n:\mathbb{R}^n\times\mathbb{R}^n\rightarrow \mathbb{R}^n$) for the first (resp. second) projection.

\begin{prp}\label{prpRaffinementIneq}
Let $f:\p{n}_\kk\rightarrow\p{n}_\kk$ be a determinantal map of Hilbert-Burch matrix $\Phi_f=(\phi_{ij})_{\genfrac{}{}{0pt}{}{0\leqslant i \leqslant n}{ 1\leqslant j\leqslant n}}$ of Hilbert-Burch degree $(d_1,\ldots,d_n)$. Then for all $k\in\lbrace 0,\ldots,n\rbrace$:
\begin{equation*}
\d{f}{k}\leqslant \MV_{2n}(\underbrace{S_n^{\mbf{x}},\ldots,S_n^{\mbf{x}}}_{k},P_1^{\mbf{y}},\ldots,P_n^{\mbf{y}},\underbrace{S_n^{\mbf{y}},\ldots,S_n^{\mbf{y}}}_{n-k})\leqslant \sigma_{n-k,n}(d_1,\ldots,d_n)
\end{equation*}
where for $l\in\lbrace 1,\ldots,n\rbrace$, $P_l^{\mbf{y}}\subset\p{n}_\kk\times\p{n}_\kk$ is the Newton polytope of the $l$-th entry of the matrix $(y_0\;\ldots\;y_n)\Phi_f$ and $\sigma_{k,n}$ is the $k$-th symmetric polynomial in $n$-variables.
\end{prp}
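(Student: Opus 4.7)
The plan is to chain the two inequalities by going through the polynomial system defining $\p{}(\I_f)$. For the first inequality, I would combine the geometric interpretation of projective degrees in \Cref{rmkCornerstone} with the inclusion $\Gamma_f \subseteq \p{}(\I_f)$ to write
\[\d{f}{k} \;\leqslant\; \#\big(H_{\mbf{x}}^k \cap \p{}(\I_f) \cap H_{\mbf{y}}^{n-k}\big).\]
Since $\p{}(\I_f)$ is cut out in $\p{n}_\kk \times \p{n}_\kk$ by the $n$ bi-homogeneous entries $\phi_1,\ldots,\phi_n$ of $(\msf{y}_0\;\ldots\;\msf{y}_n)\Phi_f$, the right-hand side is the number of isolated solutions of a sparse polynomial system of $2n$ equations in $2n$ variables (one dehomogenizes one $\msf{x}$- and one $\msf{y}$-variable by using one of the general linear forms of each type, chosen so that no intersection point lies on the chosen coordinate charts). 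Bernstein's theorem \Cref{thmBernstein} then bounds this count by the mixed volume $\MV_{2n}(\underbrace{S_n^{\mbf{x}},\ldots,S_n^{\mbf{x}}}_{k}, P_1^{\mbf{y}},\ldots,P_n^{\mbf{y}}, \underbrace{S_n^{\mbf{y}},\ldots,S_n^{\mbf{y}}}_{n-k})$, since a general $\mbf{x}$-linear form has Newton polytope $S_n^{\mbf{x}}$ and similarly for $\mbf{y}$.

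For the second inequality, I use that each $\phi_l = \sum_i \msf{y}_i \phi_{il}$ is bi-homogeneous of bi-degree $(d_l, 1)$, so its Newton polytope satisfies $P_l^{\mbf{y}} \subseteq d_l S_n^{\mbf{x}} + S_n^{\mbf{y}}$, the right-hand side being the full polytope of monomials of this bi-degree, realized in $\mathbb{R}^{2n}$ as a Minkowski sum of polytopes lying in orthogonal $n$-planes. Invoking monotonicity of the mixed volume followed by the multilinearity property in \Cref{MVprop}, each factor $d_l S_n^{\mbf{x}} + S_n^{\mbf{y}}$ splits and the expansion becomes
\[\sum_{I \subseteq \{1,\ldots,n\}}\Big(\prod_{l \in I} d_l\Big)\, \MV_{2n}\big(\underbrace{S_n^{\mbf{x}},\ldots,S_n^{\mbf{x}}}_{k+|I|},\, \underbrace{S_n^{\mbf{y}},\ldots,S_n^{\mbf{y}}}_{2n-k-|I|}\big).\]
Because $S_n^{\mbf{x}}$ and $S_n^{\mbf{y}}$ lie in complementary $n$-dimensional subspaces of $\mathbb{R}^{2n}$, the projection formula \Cref{lmm6} (equivalently, a direct computation of $\Vol_{2n}$ on the rectangular product $\bigl(\sum \lambda_l S_n\bigr)\times\bigl(\sum \mu_l S_n\bigr)$) forces the inner mixed volume to vanish unless $k + |I| = n$, in which case the value is $1$. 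Keeping only the surviving terms with $|I| = n-k$ yields exactly $\sigma_{n-k,n}(d_1,\ldots,d_n)$.

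The main technical obstacle will be justifying the application of Bernstein's theorem, which requires finitely many common zeros in the relevant torus. Implicit in the present setting (most cleanly under a Koszul-determinantal hypothesis, guaranteeing via \eqref{condMinors} that $\p{}(\I_f)$ has codimension $n$) is that $\p{}(\I_f) \cap H_{\mbf{x}}^k \cap H_{\mbf{y}}^{n-k}$ is $0$-dimensional for general $H$'s, and a generic choice of the dehomogenization hyperplanes avoids losing points in passing to the torus. Monotonicity of the mixed volume under inclusion of polytopes, while not explicitly listed in \Cref{MVprop}, is a standard fact of convex geometry that can either be cited directly or derived from multilinearity via an additive Minkowski decomposition applied to the inclusion $P_l^{\mbf{y}} \subseteq d_l S_n^{\mbf{x}} + S_n^{\mbf{y}}$.
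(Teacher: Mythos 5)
Your argument for the first inequality has a genuine gap. You chain $\d{f}{k}\leqslant\#\big(H_{\mbf{x}}^k\cap\p{}(\I_f)\cap H_{\mbf{y}}^{n-k}\big)$ and then claim Bernstein bounds this count by the mixed volume. But \Cref{thmBernstein} only bounds the number of common zeroes lying in the torus $(\mathbb{C}^*)^{2n}$; it says nothing about solutions with some vanishing coordinate, and no choice of dehomogenization chart can move such points into the torus (a linear change of coordinates would destroy the sparsity encoded in the polytopes $P_l^{\mbf{y}}$). Since $\p{}(\I_f)$ generally carries extra codimension-$n$ components lying over coordinate subspaces --- this is precisely the phenomenon governing \Cref{prpEqMMMV} --- the intersection $H_{\mbf{x}}^k\cap\p{}(\I_f)\cap H_{\mbf{y}}^{n-k}$ typically contains points outside the torus, and the inequality $\#\big(H_{\mbf{x}}^k\cap\p{}(\I_f)\cap H_{\mbf{y}}^{n-k}\big)\leqslant\MV_{2n}(\ldots)$ is false in general; to the extent the two are comparable, the inequality $\MV_{2n}(\ldots)\leqslant\e{\p{}(\I_f)}{n-k,k}$ goes the other way. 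The correct route --- the one the paper takes --- is to stay on the graph: $\d{f}{k}=\#\big(H_{\mbf{x}}^k\cap\G_f\cap H_{\mbf{y}}^{n-k}\big)$ by \Cref{rmkCornerstone}, and since $\G_f$ is irreducible and contained in no coordinate hyperplane, for general linear forms all of these points have nonzero coordinates; they are therefore among the torus solutions of the system $l_{1,0},\ldots,l_{k,0},\phi_1,\ldots,\phi_n,l_{0,1},\ldots,l_{0,n-k}$, whose number Bernstein bounds above by the mixed volume.

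Your treatment of the second inequality is correct and in fact takes a different, arguably cleaner, route than the paper: you use $P_l^{\mbf{y}}\subseteq d_lS_n^{\mbf{x}}+S_n^{\mbf{y}}$, monotonicity and multilinearity of the mixed volume, and the vanishing of $\MV_{2n}$ of an unbalanced collection of $\mbf{x}$- and $\mbf{y}$-simplices via \Cref{lmm6}, which yields $\sigma_{n-k,n}(d_1,\ldots,d_n)$ purely combinatorially. The paper instead compares the torus count with the total number $\sigma_{n-k,n}(d_1,\ldots,d_n)$ of solutions of the system, which implicitly leans on the complete-intersection count of \Cref{prpEstimMDet}; your version avoids that dependence. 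Monotonicity of the mixed volume under inclusion of polytopes is standard and fine to invoke.
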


\begin{proof}
As explained in \Cref{rmkCornerstone}, for all $k\in\lbrace 0,\ldots,n\rbrace$, $\e{\p{}(\I_f)}{n-k,k}$ is the number of common zero solutions in $\subset\p{n}_\kk\times\p{n}_\kk$ of the polynomials 
\begin{equation}\label{eqPolySystProjDeg} l_{1,0},\ldots,l_{k,0},\phi_1,\ldots,\phi_n,l_{0,1},\ldots,l_{0,n-k}
\end{equation}
where $l_{1,0},\ldots,l_{k,0}$ (resp. $l_{0,1},\ldots,l_{0,n-k}$) are generic with respect to $S_n^{\mbf{x}}$ (resp. generic with respect to $S_n^{\mbf{y}}$) and $\phi_1,\ldots,\phi_n\in\msf{S}=\msf{R}[\msf{y}_0,\ldots,\msf{y}_n]$ are the entries of the matrix $(\msf{y}_0\;\ldots\;\msf{y}_n)\Phi_f$ (the associated polynomial system being $0$-dimensional by definition of a determinantal map, see \Cref{dfnDetMap}).

In this setting, by \Cref{thmBernstein} in the generic case, the quantity \[\MV_{2n}(\underbrace{S_n^{\mbf{x}},\ldots,S_n^{\mbf{x}}}_{k},P_1^{\mbf{y}},\ldots,P_n^{\mbf{y}},\underbrace{S_n^{\mbf{y}},\ldots,S_n^{\mbf{y}}}_{n-k})\] is the number of solutions of \eqref{eqPolySystProjDeg} whose coordinates are all non zero which shows the right hand side inequality of \Cref{prpRaffinementIneq} since the number of common zero of \eqref{eqPolySystProjDeg} is equal to $\sigma_{n-k,n}(d_1,\ldots,d_n)$.

The set of common zero of \eqref{eqPolySystProjDeg} contains moreover the set \[H_\mbf{x}^{k} \cap \G_f\cap H_\mbf{y}^{n-k}\subset\p{n}_\kk\times\p{n}_\kk\] where $H_\mbf{x}^{k}=\V(l_{1,0},\ldots,l_{k,0})$ is the zero locus of $l_{1,0},\ldots,l_{k,0}$, $\G_f$ is the graph of $f$ and $H_\mbf{y}^{n-k}=\V(l_{0,1},\ldots,l_{0,n-k})$. However, by the genericity assumptions, remark that the points of $H_\mbf{x}^{k} \cap \G_f\cap H_\mbf{y}^{n-k}$ have all non zero coordinates which shows the left hand side inequality of \Cref{prpRaffinementIneq}.
\end{proof}

Let us now isolate two technical facts that rely on \Cref{lmm6}:
\begin{lmm}\label{lmmDegTopMV} Let $P_1,\ldots,P_n\subset\mathbb{R}^n$ be $n$ polytopes and for $j\in\lbrace 1,\ldots,n\rbrace$ put $P_j^{\mbf{y}}:=P_j\times\lbrace 0_{\mathbb{R}^n}\rbrace+\lbrace 0_{\mathbb{R}^n}\rbrace\times S_n^{\mbf{y}}\subset\mathbb{R}^n\times\mathbb{R}^n$.

Then \[\MV_{2n}(P_1^{\mbf{y}},\ldots,P_n^{\mbf{y}},\underbrace{S_n^{\mbf{y}},\ldots,S_n^{\mbf{y}}}_{n})=\MV_n(P_1,\ldots,P_n).\]
\end{lmm}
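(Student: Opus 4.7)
The plan is to apply the projection formula \Cref{lmm6} to factor the $(2n)$-dimensional mixed volume into a product of two $n$-dimensional ones, and then evaluate the easier factor.

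First, I would rewrite $P_j^{\mbf{y}}$ as the product polytope $P_j \times S_n \subset \mathbb{R}^n \times \mathbb{R}^n$: this is what the defining Minkowski sum produces once one views $P_j \times \lbrace 0_{\mathbb{R}^n}\rbrace$ as living in the $\mbf{x}$-coordinates and $S_n^{\mbf{y}} = \lbrace 0_{\mathbb{R}^n}\rbrace \times S_n$ as living in the $\mbf{y}$-coordinates. In particular, the projection $\pi_{\mbf{x}}$ onto the first $n$ coordinates maps $P_j^{\mbf{y}}$ onto $P_j$, while the final $n$ arguments $S_n^{\mbf{y}}$ of the mixed volume all sit inside the coordinate subspace $\lbrace 0_{\mathbb{R}^n}\rbrace \times \mathbb{R}^n$.

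This places us in the setup of \Cref{lmm6}, up to relabelling so that the polytopes confined to the coordinate subspace play the role of $P_{n+1},\ldots,P_{n+n'}$ (permissible by the symmetry of $\MV$ in its arguments, see \Cref{MVprop}). The lemma then gives
\begin{equation*}
\MV_{2n}(P_1^{\mbf{y}},\ldots,P_n^{\mbf{y}},S_n^{\mbf{y}},\ldots,S_n^{\mbf{y}}) = \MV_n(P_1,\ldots,P_n)\cdot \MV_n(S_n,\ldots,S_n),
\end{equation*}
since projecting $P_j^{\mbf{y}}$ onto the $\mbf{x}$-factor returns $P_j$ and projecting $S_n^{\mbf{y}}$ onto the $\mbf{y}$-factor returns $S_n$.

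It remains to compute $\MV_n(S_n,\ldots,S_n)$. Expanding $\Vol_n(\lambda_1 S_n + \cdots + \lambda_n S_n) = (\lambda_1 + \cdots + \lambda_n)^n \Vol_n(S_n)$ and extracting the coefficient of $\lambda_1\cdots\lambda_n$ yields $\MV_n(S_n,\ldots,S_n) = n!\,\Vol_n(S_n)$, which equals $1$ because $\Vol_n(S_n)=1/n!$. Substituting this back finishes the proof. The only real obstacle is bookkeeping: one must verify that the hypotheses of \Cref{lmm6} are met under our relabelling and that each of the two projections lands on the announced polytope; once this is in place, the computation is immediate.
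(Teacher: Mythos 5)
Your proof is correct and follows essentially the same route as the paper: both reduce the statement to the projection formula of \Cref{lmm6}, factoring the $2n$-dimensional mixed volume as $\MV_n(P_1,\ldots,P_n)\cdot\MV_n(S_n,\ldots,S_n)$ and using $\MV_n(S_n,\ldots,S_n)=n!\,\Vol_n(S_n)=1$. The only cosmetic difference is that you identify $P_j^{\mbf{y}}$ with the product $P_j\times S_n$ and project it directly, whereas the paper first expands the Minkowski sum by multilinearity and kills the cross terms with the vanishing condition; your bookkeeping of which polytopes lie in the coordinate subspace is the right way to make the (somewhat loosely stated) \Cref{lmm6} apply.
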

\begin{proof}
It suffices to apply the multilinearity of the mixed volume (\Cref{MVprop}. \Cref{MVsymmetry}), the projection formula (\Cref{lmm6}) and the vanishing condition (\Cref{MVprop}. \Cref{MVvanish}) on the left member of the equality to obtain:
\begin{align*}\MV_{2n}(P_1^{\mbf{y}},\ldots,P_n^{\mbf{y}},\underbrace{S_n^{\mbf{y}},\ldots,S_n^{\mbf{y}}}_{n})&=\MV_n(P_1,\ldots,P_n)\MV_n(\underbrace{S_n,\ldots,S_n}_{n})\\
&=\MV_n(P_1,\ldots,P_n).\end{align*}
\end{proof}

\begin{lmm}\label{lmmDecompoMV}
Let $m,m'\in\mathbb{N}^*$ and:

\begin{tabular}{c l l}
$\bullet$ & $P_1,\ldots,P_{m}$ & $\subset\mathbb{R}^{m}\times\lbrace 0_{\mathbb{R}^{m'}}\rbrace\subset\mathbb{R}^{m}\times \mathbb{R}^{m'}$,\\
$\bullet$ & $P_{m+1},\ldots,P_{m+m'}$ & $\subset\lbrace 0_{\mathbb{R}^{m}}\rbrace \times\mathbb{R}^{m'} \subset\mathbb{R}^{m}\times \mathbb{R}^{m'}$,
\end{tabular}

be $m+m'$ polytopes. For $j\in\lbrace 1,\ldots,m+m'\rbrace$ put:
\[P_j^{\mbf{y}}:=P_j\times\lbrace 0_{\mathbb{R}^{m+m'}}\rbrace+\lbrace 0_{\mathbb{R}^{m+m'}}\rbrace\times S_{m+m'}^{\mbf{y}}\subset\mathbb{R}^{m+m'}\times\mathbb{R}^{m+m'}.\] Then for all $k\in\lbrace 0,\ldots,m+m'\rbrace$,
\begin{gather*} \MV_{2(m+m')}(\underbrace{S_{m+m'}^{\mbf{x}},\ldots,S_{m+m'}^{\mbf{x}}}_{m+m'-k},P_1^{\mbf{y}},\ldots,P_{m+m'}^{\mbf{y}},\underbrace{S_{m+m'}^{\mbf{y}},\ldots,S_{m+m'}^{\mbf{y}}}_{k})\\
= \\
\underset{p=0}{\overset{k}{\sum}}\big[ \big( \underset{\lbrace l_1,\ldots,l_p\rbrace \subset \atop\lbrace 1,\ldots,m'\rbrace}{\sum} \MV_{m'}(\underbrace{S_{m'},\ldots,S_{m'}}_{m'-p},P_{l_1},\ldots,P_{l_p})\big) \times \\
\big( \underset{\lbrace l_1,\ldots,l_{k-p}\rbrace  \subset \atop\lbrace m'+1,\ldots,m+m'\rbrace}{\sum} \MV_m(\underbrace{S_{m},\ldots,S_{m}}_{m+p-k},P_{l_1},\ldots,P_{l_{k-p}}) \big) \big]
\end{gather*}
\end{lmm}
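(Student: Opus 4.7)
I would reduce the $2(m+m')$-dimensional mixed volume on the left-hand side to a sum of products of an $m$-dimensional and an $m'$-dimensional mixed volume, via two successive direct-product decompositions: first with respect to $\mathbb{R}^{2(m+m')}=\mathbb{R}^{m+m'}_{\mathbf{x}}\oplus\mathbb{R}^{m+m'}_{\mathbf{y}}$, and then with respect to $\mathbb{R}^{m+m'}=\mathbb{R}^m\oplus\mathbb{R}^{m'}$.

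First I would write $P_j^{\mathbf{y}}=(P_j\times\{0\})+S_{m+m'}^{\mathbf{y}}$ and use the multilinearity of the mixed volume (\Cref{MVprop}(ii)) to expand the left-hand side as a sum indexed by subsets $I\subseteq\{1,\ldots,m+m'\}$, where $P_j^{\mathbf{y}}$ is replaced by $P_j\times\{0\}$ for $j\in I$ and by $S_{m+m'}^{\mathbf{y}}$ for $j\notin I$. With $s=|I|$, the $I$-th term contains $(m+m'-k)+s$ polytopes in the $(m+m')$-dimensional subspace $\mathbb{R}^{m+m'}\times\{0\}$ and $(m+m'-s)+k$ polytopes in $\{0\}\times\mathbb{R}^{m+m'}$; whenever either count exceeds $m+m'$, projecting the excess subspace-polytopes to a point in the quotient and invoking the vanishing part of \Cref{MVprop}(iii) forces the mixed volume to vanish, leaving only $s=k$. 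The direct-product case of \Cref{lmm6} then factors each surviving term as
\[\MV_{m+m'}\!\bigl(\underbrace{S_{m+m'},\ldots,S_{m+m'}}_{m+m'-k},\{P_j\}_{j\in I}\bigr)\cdot\MV_{m+m'}\!\bigl(\underbrace{S_{m+m'},\ldots,S_{m+m'}}_{m+m'}\bigr),\]
whose second factor equals $(m+m')!\,\Vol(S_{m+m'})=1$.

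Next, for each $|I|=k$, I would write $I=I_m\sqcup I_{m'}$ with $I_m\subseteq\{1,\ldots,m\}$, $I_{m'}\subseteq\{m+1,\ldots,m+m'\}$, and $p=|I_{m'}|$, and establish the inner identity
\[\MV_{m+m'}\!\bigl(\underbrace{S_{m+m'}}_{m+m'-k},\{P_j\}_{I_m},\{P_j\}_{I_{m'}}\bigr)=\MV_m\!\bigl(\underbrace{S_m}_{m+p-k},\{P_j\}_{I_m}\bigr)\cdot\MV_{m'}\!\bigl(\underbrace{S_{m'}}_{m'-p},\{P_j\}_{I_{m'}}\bigr).\]
Here \Cref{lmm6} cannot be applied directly: $S_{m+m'}$ is contained in neither subspace, and it does not equal the Minkowski sum $\tilde{S}_m+\tilde{S}_{m'}=S_m\times S_{m'}$, which is strictly larger. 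I would therefore invoke \Cref{thmBernstein}: both sides count the solutions in $(\mathbb{C}^*)^{m+m'}$ of a generic sparse system whose $k-p$ polynomials attached to $\{P_j\}_{I_m}$ involve only $\mathbf{x}$-variables, whose $p$ polynomials attached to $\{P_j\}_{I_{m'}}$ involve only $\mathbf{y}$-variables, and whose $m+m'-k$ remaining equations are generic affine-linear forms in all variables. The structured equations cut out a product variety $V_x\times V_y\subseteq(\mathbb{C}^*)^m\times(\mathbb{C}^*)^{m'}$ of dimensions $m+p-k$ and $m'-p$, and its intersection with the $m+m'-k$ generic affine hyperplanes in $\mathbb{A}^{m+m'}$ contains exactly $\deg V_x\cdot\deg V_y$ points. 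Bernstein applied separately in the $\mathbf{x}$- and $\mathbf{y}$-variables identifies these two degrees with the two mixed volumes on the right. Finally, grouping the sum over $I$ by $p=|I_{m'}|$ factors the double sum over $(I_m,I_{m'})$ into the product of a sum over $I_{m'}$ and a sum over $I_m$, yielding the claimed expression.

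The main obstacle is the inner identity of the second step: the simplex $S_{m+m'}$ couples the $\mathbf{x}$- and $\mathbf{y}$-subspaces in a way no Minkowski decomposition respects (a naive split would introduce a spurious binomial coefficient $\binom{m+m'-k}{m+p-k}$). Bernstein's theorem supplies the correct decomposition by transporting the purely convex-geometric identity to the elementary fact $\deg(V_x\times V_y)=\deg V_x\cdot\deg V_y$ for affine product varieties.
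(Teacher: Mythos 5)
Your overall architecture coincides with the paper's up to the last step: expand each $P_j^{\mathbf{y}}$ by multilinearity, kill every term except those with exactly $k$ factors of the form $P_j\times\{0_{\mathbb{R}^{m+m'}}\}$ by the dimension-count vanishing criterion, strip off the $\mathbf{y}$-block via \Cref{lmm6} (this is the content of \Cref{lmmDegTopMV}), and reduce to the inner identity
\[
\MV_{m+m'}\bigl(\underbrace{S_{m+m'},\ldots,S_{m+m'}}_{m+m'-k},\{P_j\}_{j\in I_m},\{P_j\}_{j\in I_{m'}}\bigr)=\MV_m\bigl(\underbrace{S_m,\ldots}_{m+p-k},\{P_j\}_{I_m}\bigr)\,\MV_{m'}\bigl(\underbrace{S_{m'},\ldots}_{m'-p},\{P_j\}_{I_{m'}}\bigr).
\]
Where you genuinely diverge is on this inner identity, and your instinct is correct. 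The paper disposes of it by ``decomposing'' $S_{m+m'}^{\mathbf{x}}$ (resp.\ $S_{m+m'}^{\mathbf{y}}$) as the Minkowski sum $S_m\times\{0\}+\{0\}\times S_{m'}$ and invoking multilinearity; but that sum equals $S_m\times S_{m'}\supsetneq S_{m+m'}$, and even after such a substitution, multilinearity applied to the $m+m'-k$ repeated simplices produces exactly the coefficient $\binom{m+m'-k}{m+p-k}$ you warn about. The smallest instance already shows the discrepancy: $\MV_3(S_3,S_3,[0,e_1])=1$, whereas $\MV_3(A+B,A+B,[0,e_1])=2\MV_2(S_2,[0,e_1])\MV_1(S_1)=2$ for $A=S_2\times\{0\}$, $B=\{0\}\times S_1$. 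Since the binomial factor would force $\d{\gm{g}{g'}}{0}>1$ in \Cref{prpMMGluedMap}, contradicting birationality, the lemma as stated is the correct formula and the one-line Minkowski-decomposition argument cannot be what proves it; a replacement such as yours is actually needed. Your Bernstein route is a legitimate one: both sides count torus solutions of generic sparse systems, and the identity becomes multiplicativity of the count for a product system sliced by generic affine hyperplanes.

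The one step you should not leave as an assertion is precisely that multiplicativity. The claim that $(V_x\times V_y)\cap H_1\cap\cdots\cap H_{m+m'-k}$ consists of $\deg V_x\cdot\deg V_y$ points is \emph{not} the degree formula for a $(1,1)$-divisor class on $\mathbb{P}^m\times\mathbb{P}^{m'}$ (that computation reintroduces the binomial coefficient, the excess being supported on $\V(\msf{x}_0)\cap\V(\msf{y}_0)$), so it needs its own argument. One way to finish: the generic affine subspace $\Pi=H_1\cap\cdots\cap H_{m+m'-k}$ has dimension $k$, the pullbacks of $V_x$ and $V_y$ to $\Pi$ under the two coordinate projections are subvarieties of respective codimensions $k-p$ and $p$ and of degrees $\deg V_x$ and $\deg V_y$ (generic linear sections, respectively cones, preserve degree), they can be moved independently by $\mathrm{Aff}(\mathbb{C}^m)\times\mathrm{Aff}(\mathbb{C}^{m'})$ so Kleiman transversality applies, and for two generically positioned affine varieties of complementary dimension the intersection at infinity is empty, so the count is the product of the degrees. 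With that supplied, and after fixing the index bookkeeping so that the $\MV_m$-factor carries the $P_j$ with $j\in\{1,\ldots,m\}$ and the $\MV_{m'}$-factor those with $j\in\{m+1,\ldots,m+m'\}$ (the statement's labelling is internally inconsistent on this point), your proof is complete and, on the critical step, more solid than the one in the paper.
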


\begin{proof}
The formula follows from by decomposing first $S_{m+m'}^{\mbf{x}}\subset\mathbb{R}^{m+m'}\times \mathbb{R}^{m+m'}$ (resp. $S_{m+m'}^{\mbf{y}}$) as the sum $S_{m}^{\mbf{x}}\times\lbrace 0_{\mathbb{R}^{m'}}\rbrace\times\lbrace 0_{\mathbb{R}^{m+m'}}\rbrace+\lbrace 0_{\mathbb{R}^{m}}\rbrace\times S_{m'}^{\mbf{x}}\times\lbrace 0_{\mathbb{R}^{m+m'}}\rbrace\subset\mathbb{R}^{m+m'}\times\mathbb{R}^{m+m'}$ (resp. as the sum $\lbrace 0_{\mathbb{R}^{m+m'}}\rbrace\times S_{m}^{\mbf{y}}\times\lbrace 0_{\mathbb{R}^{m'}}\rbrace+\lbrace 0_{\mathbb{R}^{m+m'}}\rbrace\times\lbrace 0_{\mathbb{R}^{m}}\rbrace\times S_{m'}^{\mbf{y}}\times\lbrace 0_{\mathbb{R}^{m+m'}}\rbrace$) and then applying the multilinearity of the mixed volume (\Cref{MVprop}. \Cref{MVsymmetry}), the projection formula (\Cref{lmm6}), the vanishing condition (\Cref{MVprop}. \Cref{MVvanish}) and eventually \Cref{lmmDegTopMV}.
\end{proof}

\begin{prp}\label{prpEqMMMV}
Let $f:\p{n}_\kk\dasharrow\p{n}_\kk$ be a Koszul-determinantal map of Hilbert-Burch matrix $\Phi_f=(\phi_{ij})_{\genfrac{}{}{0pt}{}{0\leqslant i \leqslant n}{ 1\leqslant j\leqslant n}}$. Then:
\begin{align*}\forall k\in\lbrace 0,\ldots,n\rbrace,\;\d{f}{k}= \MV_{2n}&(\underbrace{S_n^{\mbf{x}},\ldots,S_n^{\mbf{x}}}_{k},P_1^{\mbf{y}},\ldots,P_n^{\mbf{y}},\underbrace{S_n^{\mbf{y}},\ldots,S_n^{\mbf{y}}}_{n-k})\\
&\Updownarrow\\
\overline{\p{}(\I_f)\backslash\G_f}\subset\V&(\underset{i=0}{\overset{n}{\Pi}}\msf{x}_i)\subset\p{n}_\kk\times\p{n}_\kk
\end{align*}
where for $l\in\lbrace 1,\ldots,n\rbrace$, $P_l^{\mbf{y}}\subset\mathbb{R}^{n}\times\mathbb{R}^{n}$ is the Newton polytope of the $l$-th entry of the matrix $(\msf{y}_0\;\ldots\;\msf{y}_n)\Phi_f$.
\end{prp}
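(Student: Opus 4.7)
The plan is to trace the two-step chain
\[
\d{f}{k} \;\leqslant\; \#\bigl(\p{}(\I_f) \cap H_\mbf{x}^{k} \cap H_\mbf{y}^{n-k} \cap (\kk^{*})^{2n}\bigr) \;\leqslant\; \MV_{2n}\bigl(\underbrace{S_n^{\mbf{x}},\ldots,S_n^{\mbf{x}}}_{k},P_1^{\mbf{y}},\ldots,P_n^{\mbf{y}},\underbrace{S_n^{\mbf{y}},\ldots,S_n^{\mbf{y}}}_{n-k}\bigr)
\]
already recorded inside the proof of \Cref{prpRaffinementIneq}, and to characterize when each of the two inequalities is an equality. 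Here $H_\mbf{x}^{k}$ and $H_\mbf{y}^{n-k}$ are the zero loci of the generic linear forms $l_{i,0}, l_{0,j}$ appearing in \eqref{eqPolySystProjDeg}. Setting $E:=\overline{\p{}(\I_f)\setminus\G_f}$ and using generic transversality, the middle quantity decomposes as $\d{f}{k} + \#\bigl(E \cap H_\mbf{x}^{k} \cap H_\mbf{y}^{n-k} \cap (\kk^{*})^{2n}\bigr)$, the contribution from $\G_f$ being exactly $\d{f}{k}$ because the points of $\G_f\cap H_\mbf{x}^{k}\cap H_\mbf{y}^{n-k}$ all have non-zero coordinates by the genericity observed in the proof of \Cref{prpRaffinementIneq}.

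For the direction $(\Leftarrow)$, assume $E\subset \V(\prod_{i=0}^{n}\msf{x}_i)$. Then $E\cap (\kk^*)^{2n}=\emptyset$, which collapses the middle quantity to $\d{f}{k}$ and makes the first inequality an equality for every $k$. The second inequality is the equality case of Bernstein's theorem (\Cref{thmBernstein}): the hypothesis on $E$ excludes facial degenerations of the system $\phi_1,\ldots,\phi_n$ with respect to the polytopes $P_l^{\mbf{y}}$, since any common root of a facial subsystem in the corresponding subtorus would lift to a component of $\p{}(\I_f)$ whose source image meets $\p{n}_\kk\setminus \V(\prod_i \msf{x}_i)$, contradicting the assumption. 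The resulting Khovanskii--Bernstein non-degeneracy gives the required equality of the middle count with the mixed volume.

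For the direction $(\Rightarrow)$, suppose $\d{f}{k}=\MV_{2n}(\ldots)$ for every $k$. Both inequalities above must then be tight for every $k$; in particular $\#\bigl(E \cap H_\mbf{x}^{k} \cap H_\mbf{y}^{n-k} \cap (\kk^*)^{2n}\bigr)=0$ for every $k$. Since the components of $E$ lie above strict closed subschemes of the source $\p{n}_\kk$ (torsion description of $\Sym(\I_f)$ recalled in \Cref{SecKunneth}) and are of dimension $n$, any component of $E$ whose source projection met $\p{n}_\kk\setminus \V(\prod_i \msf{x}_i)$ would contribute torus solutions for a suitable choice of $k$. Hence the source projection of every component of $E$ lies in $\V(\prod_{i=0}^{n}\msf{x}_i)$, which is equivalent to $E\subset \V(\prod_{i=0}^{n}\msf{x}_i)$ in $\p{n}_\kk\times\p{n}_\kk$.

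The main obstacle will be the Bernstein equality step of the backward direction: making rigorous the claim that the condition $E\subset \V(\prod \msf{x}_i)$ prevents all facial degenerations of the mixed-volume polynomial system. This translation relies on the complete-intersection structure of $\p{}(\I_f)$ provided by the Koszul-determinantal hypothesis (\Cref{dfnKoszDetMap}), which forces the behaviour of $\p{}(\I_f)$ along the toric boundary strata of $\p{n}_\kk\times\p{n}_\kk$ to be controlled by the facial systems of the entries $\phi_1,\ldots,\phi_n$ of $(\msf{y}_0\;\ldots\;\msf{y}_n)\Phi_f$, so that combinatorial non-degeneracy and the geometric assumption on $E$ become two faces of the same statement.
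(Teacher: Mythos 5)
Your overall architecture matches the paper's: reduce everything to the torus count of $H_\mbf{x}^{k}\cap\p{}(\I_f)\cap H_\mbf{y}^{n-k}$, split it into the contribution of $\G_f$ (equal to $\d{f}{k}$ by the genericity already used in the proof of \Cref{prpRaffinementIneq}) and the contribution of $E=\overline{\p{}(\I_f)\setminus\G_f}$, and observe that the hypothesis on $E$ is exactly the vanishing of the second contribution. The genuine gap is the one you flag yourself, and the repair you sketch does not work. The equality case of \Cref{thmBernstein} fails precisely when some \emph{facial} subsystem of the polynomials in \eqref{eqPolySystProjDeg} has a common root in the torus; such a root is a common zero of the \emph{initial forms} of the $\phi_l$ with respect to some weight, i.e.\ a point on a torus orbit at infinity of the toric compactification attached to the Newton polytopes. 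It is not a common zero of the $\phi_l$ themselves, hence gives no point of $\p{}(\I_f)$ and certainly no component of $E$ meeting the $\mbf{x}$-torus; its effect is to make the torus count fall \emph{below} the mixed volume, a deficiency that the hypothesis $E\subset\V(\prod_{i}\msf{x}_i)$ --- which only constrains excess components living inside $\p{n}_\kk\times\p{n}_\kk$ --- cannot see. So your ``$(\Leftarrow)$'' direction is not closed. For comparison, the paper does not derive this equality from the hypothesis on $E$ either: it takes $\MV_{2n}(\ldots)=\#\big(\text{torus points of }H_\mbf{x}^{k}\cap\p{}(\I_f)\cap H_\mbf{y}^{n-k}\big)$ as already granted by the ``generic case'' discussion in \Cref{prpRaffinementIneq}, and only argues about which of those torus points lie on $\G_f$ versus on $E$.

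There is also a smaller but genuine omission in your ``$(\Rightarrow)$'' direction. To produce a torus point on a component $\mathbb{T}$ of $E$ with $\mathbb{T}\not\subset\V(\prod_{i}\msf{x}_i)$, you must also rule out $\mathbb{T}\subset\V(\prod_{i}\msf{y}_i)$: a component contained in a coordinate hyperplane of the $\mbf{y}$-variables meets $(\kk^*)^{2n}$ in the empty set no matter how you slice, and would be invisible to every mixed-volume count while still violating $E\subset\V(\prod_{i}\msf{x}_i)$. The paper supplies this step by noting that $\mathbb{T}$ lies over the vanishing locus of Fitting ideals of $\Phi_f$, so that its fibres are positive-dimensional projectivized kernels in the $\mbf{y}$-variables; some such argument must be added before the ``suitable choice of $k$'' can yield a point with all coordinates non-zero.
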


\begin{proof}
Assume that $\overline{\p{}(\I_f)\backslash\G_f}\subset\V(\underset{i=0}{\overset{n}{\Pi}}\msf{x}_i)\subset\p{n}_\kk\times\p{n}_\kk$ so $\p{}(\I_f)$ and $\G_f$ coincide away the coordinate exis defined by the $x_i$. Now for any  $k\in\lbrace 0,\ldots,n\rbrace$,
\begin{equation}\label{eqMVAux}
MV_{2n}(\underbrace{S_n^{\mbf{x}},\ldots,S_n^{\mbf{x}}}_{k},P_1^{\mbf{y}},\ldots,P_n^{\mbf{y}},\underbrace{S_n^{\mbf{y}},\ldots,S_n^{\mbf{y}}}_{n-k})
\end{equation} is the number of point with non zero coordinate in the $\mbf{x}$-variable of the reduced scheme $H_\mbf{x}^{k}\cap \p{}(\I_f)\cap H_\mbf{y}^{n-k})$
where $H_\mbf{x}^{k}$ is the zero locus of $k$ general linear forms in the $\mbf{x}$-variables and $H_\mbf{y}^{n-k}$ is the zero locus of $c-k$ general linear forms in the $\mbf{y}$-variables. So \eqref{eqMVAux} is also equal to the number of point with non zero coordinate in the $\mbf{x}$-variable of the reduced scheme $H_\mbf{x}^{k}\cap \G_f\cap H_\mbf{y}^{n-k})$. But since $H_\mbf{x}^{k}$ and $H_\mbf{y}^{n-k}$ are general linear subspace, this last quantity is equal to $\d{f}{k}$ which show the indirect implication in \Cref{prpEqMMMV}.

Now assume that $\overline{\p{}(\I_f)\backslash\G_f}\not\subset\V(\underset{i=0}{\overset{n}{\Pi}}\msf{x}_i)$. This implies that there exists a codimension $n$ irreducible component $\mathbb{T}$ of $\p{}(\I_f)$, distinct from $\G_f$, that is not included in $\V(\underset{i=0}{\overset{n}{\Pi}}\msf{x}_i)$. Since $\mathbb{T}$ lies above the zero locus of some Fitting ideals of $\Phi_f$, it not included neither in $\V(\underset{i=0}{\overset{n}{\Pi}}\msf{y}_i)$. Hence, there is a $k\in\lbrace 0,\ldots,n \rbrace$ such that for any $H_\mbf{x}^{k}$ which is the zero locus of $k$ general linear forms in the $\mbf{x}$-variables and any $H_\mbf{y}^{k}$ which is the zero locus of $c-k$ general linear forms in the $\mbf{y}$-variables, the scheme $H_\mbf{x}^{k}\cap \mathbb{T}\cap H_\mbf{y}^{n-k}$ contain a point with all non zero coordinate. Since this point is taken in account in the mixed volume \eqref{eqMVAux}, one has the strict inequality $\d{f}{k}<\eqref{eqMVAux}$.
\end{proof}
Hence, to know the actual term $\d{f}{k}$ of the projective degrees of a determinantal map $f$, the computation of the associated mixed volume has to be completed by a preliminary control on the support of the successive ideal of minors of $\Phi_f$. We illustrate such a control in the next subsection.

\subsection{Gluing determinantal maps subject to general conditions}\label{SecGluedMap}
Let us now focus on another construction of a \emph{glued determinantal map} \[\gm{g}{g'}:\p{m+m'}_\kk\dasharrow\p{m+m'}_\kk,\] where $m,m'\in\mathbb{N}^*$, starting from two determinantal Cremona maps $g:\p{m}_\kk\dasharrow\p{m}_\kk$ and $g':\p{m'}_\kk\dasharrow\p{m'}_\kk$. In the following, given polynomials $\phi,\psi_1,\ldots,\psi_{l}$, we denote by $\phi\in |\psi_1,\ldots,\psi_{l}|$ the condition that $\phi =\underset{k=1}{\overset{l}{\sum}}\lambda_k\psi_k$ is a general linear combination of $\psi_1,\ldots,\psi_{l}$.

\begin{prpdfn}[glued map]\label{prpdfnGluedMap}

Given any $j\in\lbrace 1,\ldots,m+m'\rbrace$, let $l_j\in\mathbb{N}$ and let:
\begin{itemize}
\item $\psi_0^{(j)},\ldots,\psi_{l_j}^{(j)}\in\RR_m=\kk[x_0,\ldots,x_m]$ if $j\in\lbrace 1,\ldots,m\rbrace$
\item $\psi_0^{(j)},\ldots,\psi_{l_j}^{(j)}\in\RR_{m'}=\kk[x_{m},\ldots,x_{m+m'}]$ if $j\in\lbrace m+1,\ldots,m+m'\rbrace$.
\end{itemize}
Let also $\Phi_g=(\phi_{ij}^{(g)})_{\genfrac{}{}{0pt}{}{0\leqslant i \leqslant m}{ 1\leqslant j \leqslant m}}\in\RR_m^{(m+1)\times m}$, $\Phi_{g'}=(\phi_{ij}^{(g')})_{\genfrac{}{}{0pt}{}{0\leqslant i \leqslant n}{ m+1\leqslant j \leqslant m+m'}}\in\RR_{m'}^{(m'+1)\times m'}$ and $\Phi_{\gm{g}{g'}}=(\phi_{ij})_{\genfrac{}{}{0pt}{}{0\leqslant i \leqslant m+m'}{ 1\leqslant j \leqslant m+m'}}\in\RR_{m+m'}^{(m+m'+1)\times (m+m')}$ where $\RR_{m+m'}=\kk[x_0,\ldots,x_{m+m'}]$ be such that:
\begin{itemize}
\item for any $j\in\lbrace 1,\ldots, m\rbrace$, each entry $\phi_{ij}^{(g)}$ of the $j$-th column of $\Phi_g$ is such that $\phi_{ij}^{(g)}\in|\psi_0^{(j)},\ldots,\psi_{l_j}^{(j)}|$.
\item for any $j\in\lbrace 1,\ldots, m'\rbrace$, each entry $\phi_{ij}^{(g')}$ of the $j$-th column of $\Phi_{g'}$ is such that $\phi_{ij}^{(g')}\in |\psi_0^{(j+m)},\ldots,\psi_{l_j+m}^{(j+m)}|$.
\item for any $j\in\lbrace 1,\ldots, m+m'\rbrace$, each entry $\phi_{ij}$ of the $j$-th column of $\Phi_{\gm{g}{g'}}$ is such that $\phi_{ij}\in |\psi_0^{(j)},\ldots,\psi_{l_j}^{(j)}|$.
\end{itemize}
Assume that $\codim \V\big(\I_m(\Phi_g)\big)=\codim \V\big(\I_{m'}(\Phi_{g'})\big)=2$, then \[\codim \V\big(\I_{m+m'}(\Phi_{\gm{g}{g'}})\big)=2\] and define the \emph{glued map} $\gm{g}{g'}:\p{m+m'}_\kk\dasharrow\p{m+m'}_\kk$ as the map whose base locus $\I_{\gm{g}{g'}}$ is the $(m+m')$-minors ideal of $\Phi_{\gm{g}{g'}}$.
\end{prpdfn}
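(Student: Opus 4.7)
The plan is to establish the codimension equality $\codim \V\bigl(\I_{m+m'}(\Phi_{\gm{g}{g'}})\bigr)=2$; the well-definedness of $\gm{g}{g'}$ as the rational map with the indicated base ideal then follows automatically by Hilbert–Burch. The upper bound $\codim\leqslant 2$ is the classical Eagon–Northcott/Macaulay bound on the codimension of the maximal minors of an $(n+1)\times n$ matrix over a polynomial ring, valid as soon as the minor ideal is nonzero.

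For the lower bound $\codim\geqslant 2$, I would use a semi-continuity argument. The admissible matrices are parameterized by an affine space $\Lambda$ of coefficients for the general linear combinations; the incidence variety
\[
X=\bigl\{(\lambda,x)\in\Lambda\times\p{m+m'}_\kk:x\in\V(\I_{m+m'}(\Phi_{\gm{g}{g'}}^\lambda))\bigr\}
\]
has upper semi-continuous fiber dimensions along the projection $X\to\Lambda$, so the locus $\bigl\{\lambda:\codim\V(\I_{m+m'}(\Phi_{\gm{g}{g'}}^\lambda))\geqslant 2\bigr\}$ is Zariski-open. Since the notation $\phi\in|\psi_0,\ldots,\psi_l|$ picks out a dense open locus in $\Lambda$, it suffices to exhibit one point $\lambda_0\in\Lambda$ with $\codim=2$.

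For $\lambda_0$, I would take the block specialization of \Cref{prpKunneth}: $\Phi_g$ in the top-left $(m+1)\times m$ block, $\Phi_{g'}$ in the bottom-right $(m'+1)\times m'$ block (sharing one middle row), and zeros elsewhere; this matrix lies in $\Lambda$ because $0$ belongs trivially to every span $|\psi_0^{(j)},\ldots,\psi_{l_j}^{(j)}|$. A Laplace expansion along the two column blocks yields
\[
\I_{m+m'}(\Phi_{\gm{g}{g'}}^{\lambda_0})=\delta_0(\Phi_{g'})\cdot\I_m(\Phi_g)+\delta_m(\Phi_g)\cdot\I_{m'}(\Phi_{g'}),
\]
where $\delta_i(\cdot)$ denotes the signed maximal minor omitting row $i$. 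The zero locus decomposes as
\[
\V(\I_m(\Phi_g))\cup\V(\I_{m'}(\Phi_{g'}))\cup\bigl(\V(\delta_m(\Phi_g))\cap\V(\delta_0(\Phi_{g'}))\bigr).
\]
The first two components have codimension $2$ in $\Spec\RR$ by the hypotheses and the flatness of $\RR$ over $\RR_m$ and $\RR_{m'}$.

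The main obstacle is controlling the third component. Since $\delta_m(\Phi_g)\in\RR_m$ and $\delta_0(\Phi_{g'})\in\RR_{m'}$ share only the variable $x_m$, their common zero locus has codimension $2$ if and only if they are coprime in $\RR$, i.e.\ share no common factor in $\kk[x_m]$. If this fails for the specific pair $(m,0)$, I would vary the ``shared row'' to another pair $(i_0,j_0)$; the hypotheses $\codim\V(\I_m(\Phi_g))=\codim\V(\I_{m'}(\Phi_{g'}))=2$ translate into $\gcd_i\delta_i(\Phi_g)=1$ in $\RR_m$ and $\gcd_j\delta_j(\Phi_{g'})=1$ in $\RR_{m'}$, and I expect a short combinatorial argument on the irreducible factors in $\kk[x_m]$ shared by the $\delta$'s to produce a pair $(i_0,j_0)$ with coprime minors, supplying the required $\lambda_0$ and completing the proof.
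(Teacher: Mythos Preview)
Your approach is sound but takes a substantially different route from the paper. The paper's proof is a three-line direct argument: if $\codim\V(\I_{m+m'}(\Phi_{\gm{g}{g'}}))<2$ then the maximal minors share a nontrivial common factor, and the paper asserts that after column operations one column of $\Phi_{\gm{g}{g'}}$ would then have all entries divisible by that factor, contradicting the genericity of the entries. No specialization, no semi-continuity, no Laplace expansion.

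Your semi-continuity/specialization strategy works and is arguably more transparent; the Laplace expansion and the set-theoretic decomposition of the zero locus are both correct. The gap you flag as the ``main obstacle'' closes easily once you use homogeneity. Any common irreducible factor of $\delta_{i_0}(\Phi_g)\in\RR_m$ and $\delta_{j_0}(\Phi_{g'})\in\RR_{m'}$ in the UFD $\RR$ lies in $\RR_m\cap\RR_{m'}=\kk[x_m]$, and since the minors are homogeneous the only candidate (up to units) is $x_m$ itself. The hypothesis $\codim\V(\I_m(\Phi_g))=2$ forces $\gcd_i\delta_i(\Phi_g)=1$, so some $\delta_{i_0}(\Phi_g)$ is not divisible by $x_m$; likewise for $g'$. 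This pair $(i_0,j_0)$ is automatically coprime, and permuting rows of $\Phi_g,\Phi_{g'}$ so that these become the shared row of the block matrix still yields a point of $\Lambda$ (each entry is either $0$ or already a combination of the $\psi^{(j)}$'s). That completes your argument.

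Comparing the two: the paper's proof is shorter but the step ``after column operations one column has a common factor'' is not justified there and does not follow from elementary linear algebra over a polynomial ring (the column dependence one gets mod the factor need not have a unit coefficient). Your route avoids this issue entirely at the cost of a more involved construction; it also makes explicit use of the hypotheses on $\Phi_g$ and $\Phi_{g'}$, which the paper's argument does not visibly do.
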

In the following, under the notation of \Cref{prpdfnGluedMap} and under the assumption that $\codim \V\big(\I_m(\Phi_g)\big)=\codim \V\big(\I_{m'}(\Phi_{g'})\big)=2$, we let $g:\p{m}_\kk\dasharrow\p{m}_\kk$ (resp. $g':\p{m'}_\kk\dasharrow\p{m'}_\kk$) be the map whose base locus $\I_g$ is equal to $\I_m(\Phi_g)$ (resp. $\I_{g'}=\I_m(\Phi_{g'})$).

\begin{proof}
Assume that $\codim \V\big(\I_m(\Phi_g)\big)=\codim \V\big(\I_{m'}(\Phi_{g'})\big)=2$ and suppose by contradiction that $\codim \V\big(\I_{m+m'}(\Phi_{\gm{g}{g'}})\big)<2$. Then there is a common factor to each $m+m'$ minors of $\Phi_{\gm{g}{g'}}$, so, after operation on columns, one column of $\Phi_{\gm{g}{g'}}$ have all its entries sharing a common factor. But it is impossible under the genericity assumption on the entries of $\Phi_{\gm{g}{g'}}$.
\end{proof}

In the following example, we illustrate however that the glued map $\gm{g}{g'}$ of two Koszul-determinantal maps $g$ and $g'$, though determinantal by \Cref{prpdfnGluedMap}, may not be Koszul-determinantal.

\begin{ex}
Put $m=n=2$ and let $l_1=l_2=l_3=l_4=2$ with:
\begin{itemize}
\item $\psi_0^{(1)}=\psi_0^{(2)}=x_1$, $\psi_1^{(1)}=\psi_1^{(2)}=x_2$
\item $\psi_0^{(3)}=\psi_0^{(4)}=x_2$, $\psi_1^{(3)}=\psi_1^{(4)}=x_3$.
\end{itemize}
Then, the maps $g:\p{2}_\kk\dasharrow\p{2}_\kk$ and $g':\p{2}_\kk\dasharrow\p{2}_\kk$ are Koszul-determinantal but $\gm{g}{g'}:\p{4}_\kk\dasharrow\p{4}_\kk$ is such that $\codim \V\big(\I_{1}(\Phi_{\gm{g}{g'}})\big)=3<4$ ($\I_{1}(\Phi_{\gm{g}{g'}})=(x_1,x_2,x_3)$).
\end{ex}

Let us now describe the projective degrees of a glued determinantal map in the almost linear setting.
\begin{prp}\label{prpMMGluedMap}
Let $m,d\geqslant 1$, put $l_1=\ldots=l_m=m+1,l_{m+1}=1,l_{m+2}=m+3$
\begin{itemize}
\item $\forall j\in\lbrace 1,\ldots, m\rbrace, k\in\lbrace 0,\ldots, m\rbrace$, let $\psi_k^{(j)}=x_k$
\item $\psi_0^{(m+1)}=x_0$, $\psi_1^{(m+1)}=x_1$
\item $\forall k\in\lbrace 1,\ldots,3d\rbrace$, $\psi_k^{(m+2)}$ is the $k$-th generator of the product  \[(x_m,x_{m+1},x_{m+2})\cdot(x_{m+1},x_{m+2})^{d-1}=(x_mx_{m+1}^{d-1},x_mx_{m+2}^{d-1})+(x_{m+1},x_{m+2})^{d}.\]
\end{itemize}
Let $\gm{g}{g'}:\p{m+2}_\kk\dasharrow\p{m+2}_\kk$ be the glued map associated to the previous data as defined in \Cref{prpdfnGluedMap} and whose base ideal $\I_{\gm{g}{g'}}$ is the $(m+2)$-minors ideal of $\Phi_{\gm{g}{g'}}$.

Then $\gm{g}{g'}$ is a determinantal map and moreover:
\[\forall k\in \lbrace 0,\ldots,m+2\rbrace,\,\d{\gm{g}{g'}}{k}=\binom{m}{m-k}+(d+1)\binom{m}{m-k+1}+\binom{m}{m-k+2}\] with the convention that $\binom{j}{i}=0$ if $i<0$ or $i>j$.
\end{prp}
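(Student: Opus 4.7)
The plan is to apply \Cref{prpEqMMMV} to express each projective degree $\d{\gm{g}{g'}}{k}$ as a mixed volume, then evaluate that mixed volume using multilinearity combined with the projection formula of \Cref{lmm6}. As preliminaries, one must verify that $\gm{g}{g'}$ is Koszul-determinantal---this amounts to the codimension bounds $\codim \V(\I_j(\Phi_{\gm{g}{g'}})) \geqslant m+3-j$ for $j = 1,\ldots,m+1$, which follow from the genericity of the entries together with the structured support of each column---and that the excess components $\overline{\p{}(\I_{\gm{g}{g'}}) \setminus \G_{\gm{g}{g'}}}$ lie inside $\V(\prod_i x_i)$, which one reads off from the support of the successive ideals of minors of $\Phi_{\gm{g}{g'}}$ (each column being supported only on variables from a structured block forces the corresponding components to lie in coordinate hyperplanes).

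Next, identify the Newton polytopes $P_j^{\mbf{y}} \subset \mathbb{R}^{m+2}\times\mathbb{R}^{m+2}$ of the entries of $(\msf{y}_0\,\cdots\,\msf{y}_{m+2})\Phi_{\gm{g}{g'}}$: each decomposes as a Minkowski sum $P_j^{\mbf{y}} = P_j^{\mbf{x}} + S_{m+2}^{\mbf{y}}$ with $P_j^{\mbf{x}} = S_m^{\mbf{x}}$ for $1 \leqslant j \leqslant m$, $P_{m+1}^{\mbf{x}}$ a segment in the $(x_{m+1},x_{m+2})$-plane, and $P_{m+2}^{\mbf{x}}$ the trapezoid spanned by $\{e_m + (d-1)e_{m+1},\, e_m + (d-1)e_{m+2},\, d\,e_{m+1},\, d\,e_{m+2}\}$. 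Expanding the $2(m+2)$-dimensional mixed volume via multilinearity in each $P_j^{\mbf{y}}$ and invoking the vanishing criterion for mixed volumes, only the terms that place exactly $m+2-k$ of the bilinear polytopes into the $x$-block (with the others contributing $S_{m+2}^{\mbf{y}}$) survive. Applying \Cref{lmm6} to each surviving term factors out $\MV_{m+2}(S_{m+2},\ldots,S_{m+2}) = 1$ from the $y$-block, and one obtains
\[
\d{\gm{g}{g'}}{k} \;=\; \sum_{\substack{J \subset \{1,\ldots,m+2\} \\ |J| = m+2-k}} \MV_{m+2}\bigl(\underbrace{S_{m+2},\ldots,S_{m+2}}_k,\, \{P_j^{\mbf{x}}\}_{j \in J}\bigr).
\]

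Partition the sum over $J$ according to $J \cap \{m+1,m+2\}$, yielding four cases with respective multiplicities $\binom{m}{m-k}$, $\binom{m}{m-k+1}$, $\binom{m}{m-k+1}$, $\binom{m}{m-k+2}$. Evaluate each inner mixed volume using \Cref{thmBernstein}: in the three cases not involving $P_{m+2}^{\mbf{x}}$ alone (i.e.\ both endpoints in $J$, only $m+1$ in $J$, or neither in $J$) a generic linear system in the relevant subset of $x$-variables admits exactly one solution in $(\mathbb{C}^*)^{m+2}$, giving MV $=1$; in the case where $J$ contains $m+2$ but not $m+1$, the trapezoidal support combined with the $S_m$-simplices and one $S_{m+2}$-linear cut reduces, after elimination of $x_0,\ldots,x_{m-1}$ and of one of $(x_{m+1},x_{m+2})$ via the remaining linear form, to a univariate polynomial of degree exactly $d$ whose roots are generically in $\mathbb{C}^*$, yielding MV $=d$. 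Summing the four contributions $\binom{m}{m-k}\cdot 1 + \binom{m}{m-k+1}\cdot 1 + \binom{m}{m-k+1}\cdot d + \binom{m}{m-k+2}\cdot 1$ produces the claimed formula $\binom{m}{m-k} + (d+1)\binom{m}{m-k+1} + \binom{m}{m-k+2}$.

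The main obstacle is twofold: first, the excess-components hypothesis of \Cref{prpEqMMMV} requires a careful geometric analysis of the Fitting ideals $\I_j(\Phi_{\gm{g}{g'}})$ to show that each irreducible component of their zero locus is contained in some coordinate hyperplane, which relies critically on the block-supported structure of the columns; second, the mixed-volume computation in the third case, where one must verify that the trapezoidal Newton polytope contributes precisely the factor $d$ (not $d+1$ or $2d-1$) once the variable elimination is carried out, which requires showing that the leading monomial of the resulting univariate polynomial is generically non-vanishing and that none of its roots becomes degenerate under the genericity hypotheses.
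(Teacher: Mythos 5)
Your proposal is correct and follows the paper's strategy through its main reduction: verify that $\gm{g}{g'}$ is Koszul-determinantal and that the excess components of $\p{}(\I_{\gm{g}{g'}})$ lie in $\V(\prod_i\msf{x}_i)$, invoke \Cref{prpEqMMMV} to identify $\d{\gm{g}{g'}}{k}$ with a mixed volume, and expand that mixed volume by multilinearity together with the projection formula of \Cref{lmm6}. Where you diverge is in the terminal evaluation. The paper packages the expansion as \Cref{lmmDecompoMV}, obtains the K\"unneth-type identity $\d{\gm{g}{g'}}{k}=\sum_{p}\d{g}{p}\d{g'}{k-p}$, and then quotes the known projective degrees $\d{g}{}=(1,\binom{m}{1},\ldots,\binom{m}{m})$ of a general linear determinantal map and $\d{g'}{}=(1,d+1,1)$ from the literature. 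You instead evaluate the four families of small mixed volumes directly with Bernstein's theorem, obtaining the values $1,1,d,1$; this is consistent with the paper, since your two cases where $J$ meets $\lbrace m+1,m+2\rbrace$ in a single element recombine into $\d{g'}{1}=1+d$, and it makes the last step self-contained at the price of carrying out the elimination argument for the trapezoidal polytope (which you do correctly: the substituted univariate polynomial has degree exactly $d$ with generically nonvanishing leading coefficient and no roots on the coordinate axes). One small point of care: as you describe it, $P_{m+2}$ has vertices with a component along $e_m$, so it does not lie in the two-dimensional coordinate block spanned by $e_{m+1},e_{m+2}$; this is harmless for your direct Bernstein evaluations of each $\MV_{m+2}$, but if one wanted to factor the $\lbrace m+1,m+2\rbrace$-block off via \Cref{lmm6}, as \Cref{lmmDecompoMV} does, one should dehomogenize with respect to the shared variable $x_m$ rather than $x_0$.
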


\begin{proof}
First, under our general assumptions, $\codim\I_{m+2}(\Phi_{\gm{g}{g'}})=2$ and for all $k\in \lbrace 1,\ldots,n\rbrace$, $\codim\I_{k}(\Phi_{\gm{g}{g'}})\geqslant 3+m-k$, hence $\gm{g}{g'}$ is Koszul-determinantal. Moreover $\overline{\p{}(\I_{\gm{g}{g'}})\backslash\G_{\gm{g}{g'}}}\subset\V(\underset{i=0}{\overset{n}{\Pi}}x_i)\subset\p{m+2}_\kk\times\p{m+2}_\kk$ hence, by \Cref{prpEqMMMV}, we can use the mixed volumes of the polytopes $P_j=\NP(\psi_{1}^{(j)})+\ldots+\NP(\psi_{l_j}^{(j)})$ ($j\in\lbrace 1,\ldots,m+2\rbrace$) to compute the projective degrees of $\gm{g}{g'}$.

By applying \Cref{lmmDecompoMV}, given any $k\in \lbrace 0,\ldots,m+2\rbrace$, one has the formula:
\begin{gather*} \MV_{2(m+2)}(\underbrace{S_{m+2}^{\mbf{x}},\ldots,S_{m+2}^{\mbf{x}}}_{m+2-k},P_1^{\mbf{y}},\ldots,P_{m+2}^{\mbf{y}},\underbrace{S_{m+2}^{\mbf{y}},\ldots,S_{m+2}^{\mbf{y}}}_{k})\\
= \\
\underset{p=0}{\overset{k}{\sum}}\big[ \big( \underset{\lbrace l_1,\ldots,l_{k-p}\rbrace  \subset \atop\lbrace 1,\ldots,m\rbrace}{\sum} \MV_m(\underbrace{S_{m},\ldots,S_{m}}_{m+p-k},P_{l_1},\ldots,P_{l_{k-p}}) \big)\times \\
 \big( \underset{\lbrace l_1,\ldots,l_p\rbrace \subset \atop\lbrace m+1,m+2\rbrace}{\sum} \MV_2(\underbrace{S_{2},\ldots,S_{2}}_{2-p},P_{l_1},\ldots,P_{l_p})\big)
 \big] \\
\Updownarrow\\
\d{g|g'}{k}=\underset{p=0}{\overset{k}{\sum}}\big[  \d{g}{p}\d{g'}{k-p} \big]
\end{gather*}
with the convention that $\d{g}{p}=0$ if $p>m$ and $\d{g'}{p}=0$ if $p>2$.

The result of \Cref{prpMMGluedMap} follows from the fact that:
\begin{itemize}
\item $\d{g}{}=(1,\binom{m}{1},\ldots,\binom{m}{m-1},\binom{m}{m})$ as $g$ is general determinantal map, see \cite[Theorem 2]{PanGonzalez2005CarClassesDetCremTrans}
\item $\d{g'}{}=(\d{g'}{0},\d{g'}{1},\d{g'}{2})=(1,d+1,1)$, this follows for instance from \cite[Th. 5.14]{BuCiDAnd2018MultiGrad}.
\end{itemize}
\end{proof}

\bibliographystyle{alpha}
%\bibliography{../bibliographie}

\end{document}